\newtheorem{theorem}{Theorem}%[section]
\newtheorem{cor}[theorem]{Corollary}
\begin{document}%%%%%%%%%%%%%%%%%%%%%%%%%%%%%%%%%%%%%%%%%%%%%%%%%%%%%%%%
%%%%%%%%%%%%%%%%%%%%%%%%%%%%%%%%%%%%%%%%%%%%%%%%%%%%%%%%%%%%%%%%%%%%%%%%

\title[Andrews-Gordon Type Series]{Andrews-Gordon Type Series for Capparelli's and G\"{o}llnitz-Gordon Identities}

\author[Kur\c{s}ung\"{o}z]{Ka\u{g}an Kur\c{s}ung\"{o}z}
\address{Faculty of Engineering and Natural Sciences, Sabanc{\i} University, \.{I}stanbul, Turkey}
\email{kursungoz@sabanciuniv.edu}

%    For articles to be published after 1 January 2010, you may use
%    the following version:
\subjclass[2010]{05A17, 05A15, 11P84}

\keywords{Partition Generating Function, Andrews-Gordon identities, 
Capparelli's identities, G\"{o}llnitz-Gordon Identities}

\date{August 2018}

\begin{abstract}
\noindent
We construct Andrews-Gordon type evidently positive series 
as generating functions for the partitions satisfying the difference conditions 
imposed by Capparelli's identities and G\"{o}llnitz-Gordon identities.  
The construction involves building base partitions, 
and move parts around subject to certain rules.  
Some missing cases are also considered, but no new identities are given.  
\end{abstract}

\maketitle

%%%%%%%%%%%%%%%%%%%%%%%%%%%%%%%%%%%%%%%%%%%%%%%%%%%%%%%%%%%
%%%%                                                   %%%%
%%%%                   Introduction                    %%%%
%%%%                                                   %%%%
%%%%%%%%%%%%%%%%%%%%%%%%%%%%%%%%%%%%%%%%%%%%%%%%%%%%%%%%%%%

\section{Introduction}
\label{secIntro}

The definition of an integer partition and the first partition identity 
is given by Euler \cite{Euler, Andrews-bluebook}.  
An integer partition $\lambda$ of a natural number $n$ is a non-decreasing sequence of positive integers 
that sum up to $n$.  
\begin{align*}
 n = & \lambda_1 + \lambda_2 + \cdots + \lambda_m , \\
 & \lambda_1 \leq \lambda_2 \leq \cdots \leq \lambda_m 
\end{align*}
The $\lambda_i$'s are called parts.  
The number of parts $m$ is called the length of the partition $\lambda$, denoted by $l(\lambda)$.  
The number being partitioned is the weight of the partition $\lambda$, denoted by $\vert \lambda \vert$.  
One could also reverse the weak inequalities and take non-decreasing sequences, 
but we will stick to this definition for the purposes of this note.  
The point is that reordering the same parts will not give us a new partition.  
For example, the five partitions of $n = 4$ are
\begin{align*}
 4, 1+3, 2+2, 1+1+2, 1+1+1+1.  
\end{align*}

Depending on context, we sometimes allow zeros to appear in the partition.  
Clearly, the zeros have no contribution to the weight of the partition, 
but the length changes as we add or take out zeros.  

\begin{theorem}[Euler's partition identity, \cite{Euler}]
 For $n \in \mathbb{N}$, 
 the number of partitions of $n$ into distinct parts 
 equals the number of partitions into odd parts.  
\end{theorem}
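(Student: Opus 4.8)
The plan is to prove Euler's partition identity by constructing generating functions for both sides and showing they are equal through a simple algebraic manipulation. This is the cleanest approach and matches the generating-function philosophy the paper adopts throughout.

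First I would write down the generating function for partitions into distinct parts. Since each positive integer $k$ may be used at most once, a part $k$ contributes either $1$ (absent) or $q^k$ (present), so the generating function is the infinite product
\begin{align*}
 \prod_{k=1}^{\infty} (1 + q^k).
\end{align*}
Next I would write the generating function for partitions into odd parts. Each odd integer $2j-1$ may appear any number of times, contributing a geometric series $1 + q^{2j-1} + q^{2(2j-1)} + \cdots = (1 - q^{2j-1})^{-1}$, giving
\begin{align*}
 \prod_{j=1}^{\infty} \frac{1}{1 - q^{2j-1}}.
\end{align*}

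The main step is to show these two products are identical as formal power series in $q$. The key algebraic trick is the telescoping identity $1 + q^k = (1 - q^{2k})/(1 - q^k)$, which lets me rewrite the distinct-parts product as a ratio of two products:
\begin{align*}
 \prod_{k=1}^{\infty} (1 + q^k) = \prod_{k=1}^{\infty} \frac{1 - q^{2k}}{1 - q^k}.
\end{align*}
In the denominator I run over all $k \geq 1$, while the numerator runs only over even exponents $2k$; so every even factor $(1 - q^{2m})$ in the denominator cancels against a numerator factor, leaving exactly the odd factors $(1 - q^{2j-1})^{-1}$ uncancelled. This yields the odd-parts product and completes the identification.

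The step I expect to require the most care is justifying the cancellation rigorously as an operation on formal power series rather than a naive manipulation of infinite products: I would fix a truncation modulus, argue that each coefficient of $q^n$ stabilizes after finitely many factors, and confirm that the cancellation of even factors is a bijective matching of denominator terms with numerator terms. Alternatively, if a purely bijective combinatorial proof is preferred over the generating-function argument, the hard part shifts to defining an explicit weight-preserving bijection (writing each odd part's multiplicity in binary to split it into distinct parts, and conversely factoring each distinct part as a power of two times an odd number); but for consistency with the paper's generating-function framework, I would present the product manipulation as the primary argument.
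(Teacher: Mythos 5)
Your proof is correct and is essentially the argument the paper itself sketches: the product manipulation $\prod_{k\ge1}(1+q^k) = \prod_{k\ge1}\tfrac{1-q^{2k}}{1-q^k} = \prod_{j\ge1}\tfrac{1}{1-q^{2j-1}}$ is precisely the equality of \eqref{gfEulerInclExcl} and \eqref{gfEulerOdd} displayed after the theorem, with the standard identification of $\prod_{k\ge1}(1+q^k)$ as the generating function for partitions into distinct parts. The only component you omit is the series form \eqref{gfEulerDist} and its base-partition interpretation, but the paper presents that as motivation for the constructions in later sections rather than as a necessary step in proving Euler's identity, so nothing is missing from your argument.
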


For example, there are two partitions of 4 into distinct parts (4, 1+3).  
There are also two partitions of 4 into odd parts (1+3, 1+1+1+1).  

The former condition in the theorem, namely parts being odd, 
is a \emph{congruence} condition.  
Parts $\equiv 1 \pmod{2}$ are allowed to appear, 
but parts $\equiv 0 \pmod{2}$ are not.  
The latter condition is a \emph{difference} condition.  
It simply says that the pairwise difference of parts are at least one.  
Or, successive parts must have difference at least one between them.  

There are a variety of proofs and versions of Euler's identity \cite{Andrews-bluebook}.  
We will focus on a particular one. 
\begin{align}
\label{gfEulerDist}
 & \sum_{n \geq 0} \frac{ q^{ (n^2+n)/2 } }{ (q; q)_n } \\ 
\label{gfEulerInclExcl}
 = & \left( (q^2; q^2)_\infty \right) \; \left( \frac{ 1 }{(q; q)_\infty } \right) \\
\label{gfEulerOdd}
 = & \frac{ 1 }{ (q; q^2)_\infty }
\end{align}
Here and throughout, for $n \geq 0$, 
\begin{align*}
 (a; q)_n = & \prod_{j = 1}^{n} (1 - a q^{j-1} ), \\
 (a; q)_\infty = \lim_{n \to \infty} (a; q)_n = & \prod_{j = 1}^{\infty} (1 - a q^{j-1} ).  
\end{align*}
The empty products are 1.  

Series such as \eqref{gfEulerDist} or products such as \eqref{gfEulerOdd} 
are called partition generating functions.  
The infinite product \eqref{gfEulerOdd} generates partitions into odd parts \cite{Andrews-bluebook}.  

It is also possible to interpret \eqref{gfEulerDist} as the 
generating function of partitions into distinct parts.  
The summation index $n$ keeps track of the number of parts.  
The exponent $(n^2+n)/2$ is recognized as $1 + 2 + \cdots + n$, 
which is the partition into $n$ distinct parts having minimal weight.  
We will call such partitions \emph{base partition}s.  
Finally, $\frac{1}{(q; q)_n}$ gives us a partition $\mu$ into $n$ parts, 
counting zeros.  
We incorporate parts of $\mu$ into the base partition 
by adding the $i$th largest part of $\mu$ to the $i$th largest part of the base partition.  
After this operation, the resulting partition still has distinct parts.  

Conversely, given a partition $\lambda$ into $n$ distinct parts, 
we can subtract $\mu_i$ from the $i$th smallest part 
so as to make the partition into the base partition $1+2+\cdots+n$, 
obtaining a partition into $n$ parts (counting zeros) along the way.  

The product of infinite products \eqref{gfEulerInclExcl} is important in its own right.  
If we expand the two infinite products separately as series, 
the first one will have negative coefficients.  
The product of the two series will eventually have positive coefficients, 
guaranteed by \eqref{gfEulerDist} or \eqref{gfEulerOdd}.  
However, it is not immediately clear that this is the case.  
This is an application of inclusion-exclusion method in integer partition theory \cite{Andrews-bluebook}.  

After Euler's introduction of integer partitions, and his partition identity, 
a milestone is the Rogers-Ramanujan identities \cite{RR}.  
We present Schur's version of the first identity \cite{Schur-RR}, 
followed by its original form.  

\begin{theorem}[the first Rogers-Ramanujan identity]
\label{thmRR1}
  For $n \in \mathbb{N}$, 
  the number of partitions of $n$ into distinct and non-consecutive parts 
  equals the number of partitions of $n$ into parts that are $\not\equiv 0, \pm 2 \pmod{5}$.  
\end{theorem}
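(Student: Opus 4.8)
The plan is to prove the identity through generating functions, exactly as was done above for Euler's identity. Write $\mathcal{D}(n)$ for the number of partitions of $n$ into distinct and non-consecutive parts (equivalently, successive parts differing by at least $2$), and $\mathcal{C}(n)$ for the number of partitions of $n$ into parts $\not\equiv 0, \pm 2 \pmod 5$, that is, into parts $\equiv \pm 1 \pmod 5$. The congruence side is immediate: allowing each admissible residue class to be used arbitrarily often gives
\begin{align*}
 \sum_{n \geq 0} \mathcal{C}(n)\, q^n = \frac{1}{ (q; q^5)_\infty\, (q^4; q^5)_\infty }.
\end{align*}
For the difference side I would invoke the base partition method of the Euler discussion: the minimal-weight partition into $n$ parts with pairwise gaps at least $2$ is $1 + 3 + 5 + \cdots + (2n-1) = n^2$. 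Adding to it a partition $\mu$ into at most $n$ parts (counting zeros), the $i$th largest part of $\mu$ onto the $i$th largest part of the base partition, preserves the gap-$2$ condition and produces every such partition exactly once, whence
\begin{align*}
 \sum_{n \geq 0} \mathcal{D}(n)\, q^n = \sum_{n \geq 0} \frac{ q^{n^2} }{ (q; q)_n }.
\end{align*}
Thus the theorem reduces to the analytic Rogers--Ramanujan identity $\sum_{n \geq 0} q^{n^2}/(q;q)_n = 1/\big( (q; q^5)_\infty (q^4; q^5)_\infty \big)$.

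To attack this last equality I would introduce the refinement $G(x) = \sum_{n \geq 0} q^{n^2} x^n / (q; q)_n$ and derive the $q$-difference equation
\begin{align*}
 G(x) = G(xq) + xq\, G(xq^2).
\end{align*}
This follows by comparing coefficients of $x^n$: writing $c_n = q^{n^2}/(q;q)_n$, the ratio $c_n / c_{n-1} = q^{2n-1}/(1 - q^n)$ is precisely the relation $c_n (1 - q^n) = c_{n-1}\, q^{2n-1}$ that the functional equation demands. Since $G(0) = 1$, this functional equation determines $G(x)$ uniquely as a formal power series in $x$, and the quantity of interest is the specialization $G(1)$.

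The main obstacle is the final passage from the functional equation to the product, since---unlike Euler's identity---there is no elementary bijection between the two sides. Here I would follow the classical route: solve the functional equation to obtain a theta-type (bilateral) series expansion for $G(x)$, and then apply the Jacobi triple product identity to collapse that series, at $x = 1$, into the infinite product $1/\big( (q; q^5)_\infty (q^4; q^5)_\infty \big)$. Equating the two generating functions then yields $\mathcal{D}(n) = \mathcal{C}(n)$ for every $n$, completing the proof. The delicate point throughout is the manipulation that extracts the theta function whose triple-product form exhibits the modulus $5$; this is exactly the step that makes the Rogers--Ramanujan identity deep rather than routine, and I expect it to absorb the bulk of the work.
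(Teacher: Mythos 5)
Your reduction of the theorem to the analytic identity $\sum_{n\ge 0} q^{n^2}/(q;q)_n = 1/\left((q;q^5)_\infty (q^4;q^5)_\infty\right)$ is correct and matches how the paper reads the two sides: the product generates partitions into parts $\equiv \pm 1 \pmod{5}$, and the series generates partitions with gaps at least two via the base partition $1+3+\cdots+(2n-1)=n^2$ together with a partition $\mu$ into at most $n$ parts added componentwise. Your $q$-difference equation $G(x)=G(xq)+xq\,G(xq^2)$ is also correct (the coefficient check reduces to $q^m+(1-q^m)=1$), and $G(0)=1$ does pin $G$ down uniquely as a formal power series in $x$.

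The gap is that the decisive step --- passing from the functional equation to the modulus-$5$ product --- is only named, not carried out. ``Solve the functional equation to obtain a theta-type series and apply the Jacobi triple product'' is a description of the Rogers/Schur argument, not an argument: the entire difficulty is in exhibiting the explicit alternating series solution (at $x=1$, a bilateral sum with exponents $n(5n\pm 1)/2$), verifying that it satisfies the same recursion and initial condition, and only then invoking the triple product; none of that is done here, and you say yourself that you expect it to absorb the bulk of the work. Be aware also that the paper does not prove this theorem: it is quoted as classical background with references to Rogers--Ramanujan and Schur, and the only content the paper supplies is the combinatorial interpretation of the two generating functions, which you reproduce correctly. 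As written, your proposal is a sound reduction plus a pointer to the literature rather than a self-contained proof; to complete it you must either execute the theta-series computation or explicitly cite the analytic identity as known.
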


For instance, $n = 4$ has two partitions into distinct and non-consecutive parts (4, 1+3); 
as well as two partitions into parts that are $\not\equiv 0, \pm 2 \pmod{5}$ (4, 1+1+1+1).  

Again, the former condition in the Theorem \ref{thmRR1} is a difference condition.  
It stipulates that the difference of successive parts are at least two.  
The latter condition on parts is obviously a congruence condition.  
The $q$-series, or original, version of Theorem \ref{thmRR1} is as follows.  
\begin{align}
 \sum_{n \geq 0} \frac{ q^{ n^2 } }{ (q; q)_n } 
 = \frac{ 1 }{ (q; q^5)_\infty (q^4; q^5)_\infty }
\end{align}
The right hand side generates partitions into parts $\not\equiv 0, \pm 2 \pmod{5}$.  
On the left hand side, once we agree that the summation index keeps track of the number of parts, 
and that $n^2 = 1+3+\cdots+(2n-1)$ is our base partition, 
it is easy to see that partitions into distinct and non-consecutive parts are generated.  

Perhaps the most seminal generalization of Rogers-Ramanujan identities are due to Gordon \cite{RRG}.  
\begin{theorem}
\label{thmGordon}
  For $n \in \mathbb{N}$, 
  let $B_{k, a}(n)$ be the number of partitions of $n$ in the form $\lambda_1 + \lambda_2 + \cdots + \lambda_m$
  where at most $a-1$ occurrences of the part 1 are allowed, 
  and $\lambda_{i+k-1} - \lambda_{i} \geq 2$.  
  Let $A_{k, a}(n)$ be the number of partitions of $n$ into parts 
  $\not\equiv 0, \pm a \pmod{(2k+1)}$.  
  Then $A_{k,a}(n) = B_{k,a}(n)$.  
\end{theorem}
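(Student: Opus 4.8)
The plan is to recast the theorem as an identity of generating functions and then establish that identity by the functional–equation method of Gordon and Andrews. By definition the generating function for $A_{k,a}$ is the infinite product $\prod_{j \not\equiv 0, \pm a \,(\mathrm{mod}\ 2k+1)} (1-q^j)^{-1}$, so it suffices to prove
\[
\sum_{n \geq 0} B_{k,a}(n)\, q^n \;=\; \prod_{\substack{j \geq 1 \\ j \not\equiv 0, \pm a \pmod{2k+1}}} \frac{1}{1-q^j}.
\]
To have room to maneuver I would refine the count, letting $B_{k,a}(m,n)$ be the number of admissible partitions of $n$ into $m$ parts and forming the two–variable series $D_{k,a}(x) = \sum_{m,n \geq 0} B_{k,a}(m,n)\, x^m q^n$, in which $x$ records the number of parts. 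Specializing $x=1$ at the very end recovers the displayed identity.

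The central step is a combinatorial recurrence for these series. Classifying an admissible partition according to whether it has at most $a-2$ ones or exactly $a-1$ ones, and in the latter case deleting the $a-1$ ones and subtracting $1$ from every remaining part, one is led to
\[
D_{k,a}(x) - D_{k,a-1}(x) \;=\; (xq)^{a-1}\, D_{k,k-a+1}(xq), \qquad 1 \leq a \leq k,
\]
together with the convention $D_{k,0}(x)=0$ (so that the case $a=1$ reads $D_{k,1}(x)=D_{k,k}(xq)$) and the normalization $D_{k,a}(0)=1$. The delicate point is to check that ``delete the ones and subtract one from each remaining part'' is a weight- and length-respecting bijection onto the partitions counted by $D_{k,k-a+1}$. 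One must verify that the condition $\lambda_{i+k-1}-\lambda_i \geq 2$ transforms correctly under the shift, and that since the difference condition already forces at most $k-1$ parts in $\{1,2\}$, a partition with exactly $a-1$ ones has at most $k-a$ twos, so its image has at most $k-a$ ones and is indeed of type $D_{k,k-a+1}$.

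On the product side I would strip off $(q;q)_\infty$, writing
\[
\prod_{\substack{j \geq 1 \\ j \not\equiv 0, \pm a \pmod{2k+1}}} \frac{1}{1-q^j} \;=\; \frac{1}{(q;q)_\infty} \prod_{\substack{j \geq 1 \\ j \equiv 0, \pm a \pmod{2k+1}}} (1-q^j),
\]
and then apply the Jacobi triple product identity to the right-hand product, expressing it as a theta series $\sum_{n=-\infty}^{\infty} (-1)^n q^{(2k+1)\binom{n}{2} + cn}$ with a shift $c$ determined by $a$. Inserting the variable $x$ into this theta expansion and dividing by $(xq;q)_\infty$ produces a candidate two–variable function $H_{k,a}(x)$, and by direct $q$-series manipulation I would verify that $H_{k,a}$ obeys the very same recurrence, the convention $H_{k,0}=0$, and the normalization $H_{k,a}(0)=1$.

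Since the recurrence together with $D_{k,a}(0)=1$ determines each series uniquely as a formal power series in $x$ — the right-hand side $(xq)^{a-1}D_{k,k-a+1}(xq)$ strictly raises the $x$-degree for $a\geq 2$ and shifts to a smaller index in the anchoring case, so all coefficients are forced inductively — the two families coincide, and setting $x=1$ yields the theorem. I expect the main obstacle to be twofold: first, rigorously justifying the combinatorial recurrence, namely the bookkeeping of exactly how the difference conditions and the count of ones behave under the deletion-and-shift map and its inverse; and second, matching the theta series to that recurrence, where the precise exponent shift $c$ and the sign pattern must be pinned down so that $H_{k,a}$ reproduces $(xq)^{a-1}H_{k,k-a+1}(xq)$ term by term.
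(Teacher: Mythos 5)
The paper does not actually prove Theorem \ref{thmGordon}: it is quoted as a classical result, with Gordon's combinatorial proof and Andrews' analytic proof cited, and the two-variable series produced by the latter displayed immediately after the statement. Your proposal is precisely Andrews' analytic proof --- the refinement $b_{k,a}(m,n)$, the delete-the-ones-and-shift recurrence $D_{k,a}(x)-D_{k,a-1}(x)=(xq)^{a-1}D_{k,k-a+1}(xq)$ with $D_{k,0}=0$, the matching theta-quotient family via Jacobi's triple product, and uniqueness from the functional equations --- and it is correct in outline. One detail to tighten: in the uniqueness step the anchoring case $a=1$ reads $D_{k,1}(x)=D_{k,k}(xq)$, which moves to a \emph{larger} index, not a smaller one; what actually closes the induction is that the substitution $x\mapsto xq$ raises the $q$-degree of every coefficient of $x^m$ with $m\geq 1$, so one must induct on the exponent of $q$ (anchored by $D_{k,a}(0)=1$ and the vanishing of all other $q^0$ coefficients), with the index $a$ and the $x$-degree handled inside that outer induction.
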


Observe that the case $k = a = 2$ is the first Rogers-Ramanujan identity.  
$k = 2$ and $a = 1$ gives the second Rogers-Ramanujan identity.  
It is an easy exercise to write a generating function for $A_{k,a}(n)$.  
\begin{align}
\label{eqAndrewsGordonProduct}
  \sum_{n \geq 0} A_{k, a}(n) q^n
  = \prod_{ \substack{n \geq 1 \\ n \not\equiv 0, \pm a \pmod{(2k+1)} } } \frac{1}{1 - q^n}
\end{align}

Gordon gave a combinatorial proof of Theorem \ref{thmGordon}. 
Andrews, who independently discovered the same result \cite{Andrews-Kulliyat}, 
gave an algebraic proof \cite{Andrews-RRG}.  
Andrews defined $b_{k, a}(n, m)$ as the number of partitions enumerated by $B_{k, a}(n)$ 
which have $m$ parts, then showed that 
\begin{align*}
 & \sum_{m, n \geq 0} b_{k, a}(n, m) q^n x^m \\
 & = \sum_{n \geq 0} 
 \frac{ (-1)^n x^{kn} q^{(2k+1)(n^2+n)/2 - an} }{ (q; q)_n (xq^{n+1}; q)_\infty }
 - \frac{ (-1)^n x^{kn+a} q^{(2k+1)(n^2+n)/2 + a(n+1)} }{ (q; q)_n (xq^{n+1}; q)_\infty }.  
\end{align*}
It is not immediately clear that the right hand side produces 
positive integers when we compute its Taylor coefficients.  
Andrews later gave another interpretation for this series using inclusion-exclusion \cite{Andrews-RRG-Incl-Excl}.  

Almost a decade passed before Andrews announced another generating function 
for $b_{k, a}(n, m)$ with evidently positive coefficients \cite{Andrews-Gordon}.  
\begin{align}
\label{eqAndrewsGordonSeries}
  \sum_{m, n \geq 0} b_{k, a}(n, m) q^n x^m 
  = \sum_{n_1, n_2, \ldots, n_{k-1} \geq 0} 
    \frac{ q^{N_1^2 + N_2^2 + \cdots N_{k-1}^2 + N_{a} + N_{a+1} + \cdots + N_{k-1} } 
	  x^{N_1 + N_2 + \cdots + N_{k-1}} }
      { (q; q)_{n_1} (q; q)_{n_2} \cdots (q; q)_{n_{k-1} } }
\end{align}
Above, $N_r = n_r + n_{r+1} + \cdots + n_{k-1}$.  
Notice that substituting $x = 1$ clears the track of the number of parts.  
In other words, 
\begin{align*}
 \left. \sum_{m, n \geq 0} b_{k, a}(n, m) q^n x^m  \right\vert_{x = 1} 
 = \sum_{n \geq 0} \left( \sum_{m \geq 0} b_{k, a}(n, m) \right) q^n 
 = \sum_{n \geq 0} B_{k, a}(n) q^n.  
\end{align*}
For $x = 1$, the identity the right hand side of \eqref{eqAndrewsGordonProduct} = 
the right hand side of \eqref{eqAndrewsGordonSeries} 
has since been called Andrews-Gordon identities.  

It took some more time to find a combinatorial construction 
of the multiple series \eqref{eqAndrewsGordonSeries}, 
which is due to Bressoud \cite{Bressoud-RRG-Interpret}.  
Another construction was given in \cite{K-GordonMarking}.  

In 1988, when writing his PhD thesis on the representations of vertex operator algebras \cite{Capparelli-thesis}, 
Capparelli stumbled upon a pair of curious partition identities \cite{Capparelli-Conj}.  

\begin{theorem}[the first Capparelli identity]
\label{thmCapparelli1}
  For $n \in \mathbb{N}$, 
  let $CP_1(n)$ be the number of partitions of $n$ into parts 
  into parts that are at least two, 
  with difference at least two, 
  and difference at least four unless the sum of successive parts is a multiple of three.  
  Let $CPR_1(n)$ be the number of partitions of $n$ into distinct parts 
  which are $\not\equiv \pm 1 \pmod{6}$.  
  Then $CP_1(n) = CPR_1(n)$.  
\end{theorem}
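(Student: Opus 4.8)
The plan is to prove Theorem~\ref{thmCapparelli1} by constructing a weight-preserving bijection between the two partition classes, rather than attacking the generating functions directly. Let me denote by $\mathcal{C}_1$ the partitions counted by $CP_1(n)$ (parts $\geq 2$, gaps $\geq 2$, with the gap being $\geq 4$ unless consecutive parts sum to a multiple of three) and by $\mathcal{D}_1$ the partitions counted by $CPR_1(n)$ (distinct parts $\not\equiv \pm 1 \pmod 6$, i.e.\ parts in $\{2,3,4,6,8,9,10,12,\ldots\}$). Since the excerpt frames everything through base partitions and generating functions, my first task is to set up a generating function for $\mathcal{C}_1$ in the evidently-positive Andrews--Gordon style, then show it equals the product generating function for $\mathcal{D}_1$, which is transparently $\prod_{n \not\equiv \pm 1 (6)}(1+q^n)$.

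First I would analyze the difference conditions defining $\mathcal{C}_1$ combinatorially. The subtle clause is that successive parts $\lambda_i, \lambda_{i+1}$ may be as close as $2$ apart precisely when $\lambda_i + \lambda_{i+1} \equiv 0 \pmod 3$, and otherwise must be $\geq 4$ apart. I would classify each consecutive pair by its residue behavior modulo~$3$, noting that a gap of exactly $2$ forces $\{\lambda_i,\lambda_{i+1}\} \equiv \{1,2\}$ or $\{2,1\}$ type residues summing to $0$, whereas a gap of $3$ is always legal (sum $\equiv$ anything, but gap $\geq 4$ is only required when the sum is \emph{not} a multiple of three, so a gap of $3$ with sum not divisible by $3$ is actually forbidden---one must check this carefully). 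This residue bookkeeping is exactly the kind of structure that lets one build a minimal \emph{base partition} for each admissible ``shape,'' in the spirit of the $1+2+\cdots+n$ and $1+3+\cdots+(2n-1)$ constructions described for Euler and Rogers--Ramanujan above.

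The core construction, following the method the abstract advertises, is to fix the multiset of residue-classes (the ``shape'') of the parts, build the unique minimal partition of that shape satisfying all the difference conditions, and then show that every other partition of that shape is obtained by moving parts upward according to rules encoded by factors $1/(q;q)_{n_i}$. I would introduce counting variables $n_i$ tracking how many parts lie in each residue/gap-type stratum, compute the weight of each base partition as a quadratic form in the $N_r = n_r + \cdots$ partial sums, and assemble an evidently positive multisum. Summing this multisum should, after a finite identity (verifiable via a $q$-series manipulation or Appell-type recurrence), collapse to $\prod_{n \not\equiv \pm 1 (6)}(1+q^n)$, establishing $CP_1(n) = CPR_1(n)$.

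The hard part will be correctly enumerating the admissible local configurations around the modular exception and proving that the ``move parts around'' rules are \emph{reversible}---that is, that the map from (base partition, tuple of moves) to partitions in $\mathcal{C}_1$ is a bijection, with no admissible partition omitted and none double-counted. The gap-exactly-two exception couples adjacent parts in a way that ordinary distinct-part or Rogers--Ramanujan constructions do not, so the base partitions come in several families indexed by how consecutive ``tight'' pairs chain together, and verifying that the proposed quadratic exponent is exactly the minimal weight in each family is where the genuine combinatorial effort lies; the final $q$-series reduction to the product side I expect to be comparatively routine once the series is in hand.
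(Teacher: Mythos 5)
The first thing to note is that the paper does not actually prove Theorem~\ref{thmCapparelli1}: it is quoted as a known result, with the proof attributed to Andrews \cite{Andrews-Capparelli} and to later authors. What the paper proves is Theorem~\ref{thmCapparelli1GenFunc}, namely the evidently positive double series \eqref{eqCapparelli1GenFunc} for the refined count $cp_1(n,m)$, and the $q$-series identity \eqref{eqCoroCapparelli1} is then obtained by \emph{combining} that series with the already-known Capparelli identity. Your first three steps --- classifying the tight pairs by residues mod $3$, building a minimal base partition for each shape, and generating all partitions in $\mathcal{C}_1$ by reversible moves governed by $1/(q;q)_{n_i}$ factors --- reproduce the paper's construction quite faithfully (the paper's strata are $n_2$ ``pairs'' with gap $2$ or $3$ whose parts sum to a multiple of $3$, and $n_1$ ``singletons,'' giving base weight $6n_2^2+6n_1n_2+2n_1^2$), and that portion of your plan is sound, though the reversibility of moves past singletons and the dynamic re-pairing of streaks of consecutive multiples of $3$ is where most of the casework lives.

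The genuine gap is your final step. Reducing the resulting double sum $\sum_{n_1,n_2\ge 0} q^{2n_1^2+6n_1n_2+6n_2^2}/\bigl((q;q)_{n_1}(q^3;q^3)_{n_2}\bigr)$ to the product $(-q^2,-q^3,-q^4,-q^6;q^6)_\infty$ is not ``comparatively routine'': that equality \emph{is} Capparelli's first identity, the very statement to be proved, and it resisted proof for several years before Andrews established it using $q$-trinomial coefficients. No elementary $q$-series manipulation or Appell-type recurrence is known to collapse this sum to the product, and the paper explicitly declines to attempt it, citing the known proofs instead. As written, your argument is circular at the last step: constructing the evidently positive sum side only restates one half of the identity, and you would still need to supply an independent proof of the sum-to-product equality (Andrews' analytic argument, Capparelli's vertex-operator construction, or a genuinely new bijection onto partitions into distinct parts $\not\equiv \pm 1 \pmod 6$) for the plan to close.
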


\begin{theorem}[the second Capparelli identity]
\label{thmCapparelli2}
  For $n \in \mathbb{N}$, 
  let $CP_2(n)$ be the number of partitions of $n$ into parts 
  into parts that are not equal to two, 
  with difference at least two, 
  and difference at least four unless the sum of successive parts is a multiple of three.  
  Let $CPR_2(n)$ be the number of partitions of $n$ into distinct parts 
  which are $\not\equiv \pm 2 \pmod{6}$.  
  Then $CP_2(n) = CPR_2(n)$.  
\end{theorem}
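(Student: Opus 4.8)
The plan is to produce an evidently positive series whose coefficient of $q^n$ equals $CP_2(n)$, and then to verify that this series coincides with the infinite product $\prod_{\substack{n\geq 1\\ n\not\equiv\pm 2\,(\mathrm{mod}\,6)}}(1+q^n)$, which is the generating function for the distinct-part side $CPR_2(n)$; matching coefficients then gives $CP_2(n)=CPR_2(n)$. Following the base-partition philosophy of the introduction, I would first refine the count by introducing a variable $x$ that marks the number of parts, aiming for a multiple series $\sum_{m,n}cp_2(n,m)\,x^m q^n$ whose specialization at $x=1$ recovers $\sum_{n}CP_2(n)q^n$.

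The first substantive step is to understand the local structure that the difference conditions impose. For two successive parts $a<b$, the requirement $b-a\geq 2$ together with the exception permitting $b-a\in\{2,3\}$ only when $a+b\equiv 0\pmod{3}$ leaves exactly two kinds of close pairs: difference $2$ with $a\equiv 2\pmod{3}$, and difference $3$ with $a\equiv 0\pmod{3}$. Forbidding the part $2$ --- the prohibition distinguishing $CP_2$ from $CP_1$, which conversely admits the part $1$ --- deletes the close pair $(2,4)$. Pushing the analysis one step further, difference-$3$ close pairs chain only among multiples of $3$, producing runs $3,6,9,\ldots$, whereas a difference-$2$ close pair cannot be extended on either side; every remaining gap is at least $4$. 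This description modulo $6$ is what drives the construction.

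With the admissible patterns in hand, I would define the base partitions to be the minimal-weight partitions realizing each pattern for a fixed number of parts, and then define the moves: operations that add a common amount to a terminal block of parts, translating a tail of the partition upward while preserving every difference and congruence constraint. The goal is to show that each partition counted by $CP_2(n)$ arises from a unique base partition by a unique sequence of moves, so that the weight decomposes into a base contribution plus independent move contributions; reading this off produces the evidently positive series, with factors $(q;q)_{n_i}$ in the denominators recording the free moves. The final step is the $q$-series identity between this series and the product, for which I would rewrite the product through the residue description of $CPR_2$ (distinct odd parts together with distinct multiples of $6$) and reduce by telescoping.

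The step I expect to be the main obstacle is the bookkeeping forced by the modulo-$3$ exception. Because the admissibility of a close pair depends on the residues of its parts, both the catalogue of base partitions and the list of legal moves stratify according to residues modulo $6$, and one must check carefully that the moves are invertible and that base-plus-moves enumerates every valid partition exactly once, neither omitting configurations nor double counting where blocks of different types meet. Confirming that the resulting multiple series collapses to the stated infinite product is the second delicate point, and I would expect it to require an Andrews-Gordon type reduction rather than a routine simplification.
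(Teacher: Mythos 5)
The statement you have been asked to prove is not actually proved in this paper: the author imports it from the literature (Andrews's proof via $q$-trinomial coefficients, the Tamba--Xie construction, and Capparelli's own vertex-operator proof) and uses it in the \emph{opposite} direction, combining the known fact $CP_2(n)=CPR_2(n)$ with the newly constructed series \eqref{eqGFCp2} to deduce the $q$-series identity \eqref{eqCoroCapparelli2}. Your first step --- the local analysis of close pairs (isolated difference-$2$ pairs starting at $a\equiv 2\pmod 3$, chains of consecutive multiples of $3$, all other gaps at least $4$) and the base-partition-plus-moves construction of an evidently positive double series for $\sum_{m,n}cp_2(n,m)x^mq^n$ --- is sound and is essentially the content of Theorem \ref{thmCapparelli2GenFunc} and its proof, including the need for two families of base partitions according to whether the part $1$ occurs.

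The genuine gap is your second step. Proving that the resulting multiple series equals $(-q,-q^3,-q^5,-q^6;q^6)_\infty$ is not a telescoping argument or a routine Andrews--Gordon reduction: it is precisely the hard analytic core of Capparelli's identity, the part that resisted proof for several years and that Andrews settled only with $q$-trinomial coefficients. Rewriting the product side as distinct odd parts together with distinct multiples of $6$ does not by itself connect it to the sum side. As written, your argument establishes (modulo the bookkeeping you correctly flag) that the sum side generates $CP_2(n)$, but it assumes rather than proves the series--product identity, and that identity is logically equivalent to the theorem itself. The proposal is therefore circular at the decisive point unless you supply an independent proof of the analytic identity, which neither your outline nor this paper provides.
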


The first Capparelli identity was proved by Andrews \cite{Andrews-Capparelli}.  
The identities now have other proofs such as \cite{Tamba-Xie}, 
including one by Capparelli himself \cite{Capparelli-Conj-Proven}.  

Alladi, Andrews and Gordon gave a generating function for $CP_1(n)$ \cite{AAG-Capparelli} with a refinement, 
but the combinatorial interpretation is via several transformations.  
Their series specialize to
\begin{align*}
  \sum_{n \geq 0} CP_1(n) q^n
  = (-q^4; q^6)_\infty 
  (-q^2; q^6)_\infty
  \sum_{n \geq 0} \frac{q^{6n^2 - 3n} }{ (q^3; q^3)_{2n} }.  
\end{align*}

Sills found two double series as a generating function for $CP_1(n)$
using Bailey pairs \cite{Sills-Capparelli}, 
but the coefficients had negative contributions as well, 
calling for inclusion-exclusion.  Sills' series are 
\begin{align*}
 & \sum_{n \geq 0} CP_1(n) q^n \\
 & = \sum_{n \geq 0} \sum_{j = 0}^{2n} \frac{ q^{n^2} \left( \frac{n-j+1}{3} \right) }{ (q;q)_{2n-j} (q; q)_j } \\
 & = 1 + \sum_{\substack{ n, j, r \geq 0 \\ (n, j, r) \neq (0, 0, 0) } }
  \frac{ q^{3n^2 + \frac{9}{2}r^2 + 3j^2 + 6nj + 6rj - \frac{5}{2}r - j } 
      (q^3; q^3)_{2j+r-1} (1 + q^{2r+2j} ) (1 - q^{6r+6j} ) }
      { (q^3; q^3)_n (q^3; q^3)_r (q^3; q^3)_j (-1; q^3)_{j+1} (q^3; q^3)_{n+2r+2j}  }, 
\end{align*}
where $\left( \frac{\cdot}{\cdot} \right)$ is the Legendre symbol.  

We will denote the partitions enumerated by $CP_r(n)$ having $m$ parts by $cp_r(n, m)$ for $r = 1, 2$.  
In this note, we first construct generating functions for $cp_1(n, m)$ and $cp_2(n, m)$, 
with some missing cases in between in section \ref{secCapparelli}.  
We do not have partition identities for the missing cases, though.  
The multiple series have evidently positive coefficients, 
and they are in the form of \eqref{eqAndrewsGordonSeries}.  
The series \eqref{eqCapparelli1GenFunc} and \eqref{eqGFCp2} 
are also constructed by Kanade and Russell \cite{KR-newseries}.  
They used partial staircases following Dousse and Lovejoy as in \cite{Dousse-Lovejoy}.  

Capparelli's identities also resemble G\"{o}llnitz-Gordon identities \cite{Gollnitz-GG, Gordon-GG}.  

\begin{theorem}[the first G\"{o}llnitz-Gordon identity]
\label{thmGG1}
  For $n \in \mathbb{N}$, 
  let $D_{2,2}(n)$ be the number of partitions of $n$ 
  in which the pairwise difference of parts is at least two, 
  and difference at least three unless both parts are odd.  
  Let $C_{2,2}(n)$ be the number of partitions of $n$ into parts
  that are $\equiv 1, 4, 7 \pmod{8}$.  
  Then, $C_{2,2}(n) = D_{2,2}(n)$
\end{theorem}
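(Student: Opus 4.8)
The plan is to prove the first G\"{o}llnitz-Gordon identity by following the same base-partition philosophy that governs the rest of this note, rather than by the classical generating-function manipulations one finds in the literature. The strategy mirrors the treatment of the Rogers-Ramanujan and Andrews-Gordon cases above: I would first identify the minimal-weight ``base partition'' compatible with the difference conditions defining $D_{2,2}(n)$, then describe precisely which legal moves of parts produce every other admissible partition exactly once, and finally read off an evidently positive Andrews-Gordon type series. The endpoint is an identity of the form
\begin{align*}
 \sum_{n \geq 0} D_{2,2}(n) q^n = \frac{1}{(q; q^8)_\infty (q^4; q^8)_\infty (q^7; q^8)_\infty},
\end{align*}
since the right-hand side is the obvious product generating partitions into parts $\equiv 1, 4, 7 \pmod{8}$, exactly the set enumerated by $C_{2,2}(n)$.

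First I would unpack the difference condition. A partition counted by $D_{2,2}(n)$ has successive parts differing by at least two, with the gap forced up to at least three \emph{unless} both parts are odd; equivalently, two consecutive parts may be as close as two only when they are both odd (hence of the same parity and differing by exactly two). This is the G\"{o}llnitz-Gordon analogue of the Gordon condition $\lambda_{i+1} - \lambda_i \geq 2$ with an exception permitting tight clustering of odd parts. I would next build the base partition: among partitions of a fixed length $m$ satisfying these constraints, the lightest one is obtained by packing parts as closely as the rules allow, which yields the staircase $1, 3, 5, \ldots, (2m-1)$ of consecutive odd numbers, whose weight is $m^2$. This plays the role of $n^2$ in Theorem \ref{thmRR1}, and it explains the appearance of the modulus $8$ and the residues $1, 4, 7$ once one tracks how parity must alternate when gaps are widened.

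The heart of the argument is the combinatorial ``move parts around subject to certain rules'' step promised in the abstract. Starting from the base partition of length $m$, I would describe how to inflate it to an arbitrary admissible partition by repeatedly increasing parts while preserving the difference conditions, introducing a nonnegative counting variable for each type of move and arguing that this correspondence is a bijection. The subtlety is that widening a gap from $2$ to $3$ forces a change in the parity pattern, so the moves are not independent: raising one part may obstruct or enable a tight odd-odd pairing below it. I would organize the moves so that each free parameter $n_i$ contributes a $(q;q)_{n_i}$ denominator and a controlled shift in the exponent of $q$, thereby assembling a series in the shape of \eqref{eqAndrewsGordonSeries} with the correct base exponent and linear correction terms reflecting the residues $1,4,7 \pmod 8$.

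The main obstacle, I expect, is precisely this parity bookkeeping in the bijection. In the plain Rogers-Ramanujan case every gap can be widened independently, so the moves decouple cleanly; here the ``difference at least three unless both parts are odd'' clause couples adjacent widening operations through parity, and I must verify that my chosen set of moves reaches every partition enumerated by $D_{2,2}(n)$ once and only once. I would handle this by separating the parts into maximal runs of tightly packed odd parts and treating the transitions between runs as the places where the extra $+1$ in the gap is spent, much as the linear terms $N_a + \cdots + N_{k-1}$ appear in \eqref{eqAndrewsGordonSeries}. Once the bijection is established, equality with the product side follows by the standard observation that the base partition and the free moves together exhaust all admissible partitions, and substituting $x = 1$ (as in the Andrews-Gordon discussion above) collapses the length-tracking variable to leave the claimed identity $C_{2,2}(n) = D_{2,2}(n)$.
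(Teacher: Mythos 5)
Your plan reproduces, in outline, the construction that this paper actually carries out in Theorem \ref{thmGG1-2GenFunc}: decompose a partition counted by $D_{2,2}(n,m)$ into pairs of odd parts at distance two plus singletons, build a minimal base partition, and move parts forward to obtain the evidently positive double series \eqref{eqGfGG1}. Two details you would need to repair along the way: the moves naturally split into two independent families, so the result is a double sum over $n_2$ pairs and $n_1$ singletons rather than a single sum indexed by the length $m$; and a forward move on a pair changes the weight by $4$, so the corresponding denominator factor is $(q^4;q^4)_{n_2}$, not a plain $(q;q)_{n_i}$.

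The genuine gap is at the last step, and it is fatal to the proposal as a proof of Theorem \ref{thmGG1}. The bijection between admissible partitions and triples (base partition, $\mu$, $\eta$) proves only that $\sum_n D_{2,2}(n)q^n$ equals the multiple series; that is, it identifies the \emph{sum side}. It says nothing about the infinite product $1/\left((q;q^8)_\infty(q^4;q^8)_\infty(q^7;q^8)_\infty\right)$, which generates a completely different set of partitions (those with parts $\equiv 1,4,7 \pmod 8$). Your claim that ``equality with the product side follows by the standard observation that the base partition and the free moves together exhaust all admissible partitions'' is circular: exhausting the admissible partitions establishes the series representation of $\sum_n D_{2,2}(n)q^n$, while the equality of that series (or of $D_{2,2}(n)$ itself) with the product side is precisely the content of the G\"{o}llnitz--Gordon identity and requires an independent analytic or combinatorial argument. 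This is exactly how the paper treats the matter: Theorem \ref{thmGG1} is quoted as a known classical result of G\"{o}llnitz and Gordon and is not proved here; the new series \eqref{eqGfGG1} is what is proved bijectively, and only by combining the two does one obtain the $q$-series identity \eqref{eqCoroGG1}. Completed, your argument would yield \eqref{eqGfGG1}, not Theorem \ref{thmGG1}.
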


\begin{theorem}[the second G\"{o}llnitz-Gordon identity]
\label{thmGG2}
  For $n \in \mathbb{N}$, 
  let $D_{2,1}(n)$ be the number of partitions of $n$ 
  in which the pairwise difference of parts is at least two, 
  and difference at least three unless both parts are odd.  
  Let $C_{2,1}(n)$ be the number of partitions of $n$ into parts
  that are $\equiv 3, 4, 5 \pmod{8}$.  
  Then, $C_{2,1}(n) = D_{2,1}(n)$
\end{theorem}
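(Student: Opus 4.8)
\emph{The plan} is to realize each side as a generating function and to reduce the claimed equality $C_{2,1}(n)=D_{2,1}(n)$ to a single analytic identity between a series and an infinite product. The congruence side is immediate: admitting exactly the parts $\equiv 3,4,5\pmod 8$ gives
\begin{equation*}
 \sum_{n\geq 0} C_{2,1}(n)\,q^n = \frac{1}{(q^3;q^8)_\infty\,(q^4;q^8)_\infty\,(q^5;q^8)_\infty}.
\end{equation*}
So everything comes down to producing a matching series for the difference side and proving the two agree.

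Following the base-partition philosophy of this note, I would first track the number of parts by a variable (exactly as with $b_{k,a}(n,m)$ above) and, for each length $m$, identify the minimal-weight partition obeying the difference conditions. Since a difference of $2$ is permitted only between two odd parts and the second identity forbids the smallest values, the minimal admissible partition of length $m$ is the all-odd chain $3+5+7+\cdots+(2m+1)=m^2+2m$; this is the base partition, and its weight supplies the factor $q^{m^2+2m}$. I would then isolate two independent moves away from the base. Adding a non-increasing sequence of even numbers from the top preserves parity and every gap, contributing $1/(q^2;q^2)_m$, while choosing, for a subset of the gaps, to insert an odd increment that breaks parity and widens a difference to $3$ or $4$ contributes $(-q;q^2)_m=\prod_{j=1}^m(1+q^{2j-1})$. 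Checking that these moves are independent and that together they reach every admissible partition exactly once should yield
\begin{equation*}
 \sum_{n\geq 0} D_{2,1}(n)\,q^n = \sum_{m\geq 0}\frac{q^{m^2+2m}\,(-q;q^2)_m}{(q^2;q^2)_m}.
\end{equation*}

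What then remains is to prove that the two displayed expressions coincide, and this is where I expect the real work to lie. The combinatorial construction guarantees positivity and the correct interpretation of the left side, but it does not by itself reveal the modulus-$8$ product. I would obtain it by recognizing the summand as arising from a Bailey pair with base $q^2$, summing the limiting Bailey sum into a theta function, and applying the Jacobi triple product to rewrite that theta function as the displayed product; the shift from $q^{m^2}$ to $q^{m^2+2m}$ is precisely what separates the second identity from the first and selects the residues $3,4,5$ rather than $1,4,7$. The chief obstacle is matching the $(-q;q^2)_m$ numerator, which encodes the parity-breaking move, to the even/odd split inside the triple product: getting the bookkeeping of these parity shifts to line up cleanly with the exponents of the theta sum is the step most likely to require care. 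A purely combinatorial alternative would be a direct bijection between the two classes of partitions, but the modulus-$8$ structure makes such a bijection delicate, so I would treat the Bailey-pair-and-theta argument as the primary line of attack.
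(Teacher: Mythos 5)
There is nothing in the paper to compare your argument against: Theorem~\ref{thmGG2} is quoted as a classical result of G\"{o}llnitz and Gordon, and the single-sum series $\sum_{n\geq 0}q^{n(n+2)}(-q;q^2)_n/(q^2;q^2)_n$ you propose for the difference side is likewise only quoted in the introduction (with a pointer to Slater), never derived. The paper's own contribution for these partitions is Theorem~\ref{thmGG1-2GenFunc}, a genuinely different double series in $n_1,n_2$ obtained from a base partition with $n_2$ odd pairs and $n_1$ singletons together with forward/backward moves; the modulus-$8$ product only appears in \eqref{eqCoroGG2} by invoking the classical identity as a black box. Your plan is the standard route and is sound in outline. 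Two remarks. First, you were right to restore the hypothesis that the smallest part is at least three: as printed, $D_{2,1}(n)$ and $D_{2,2}(n)$ have identical definitions, so the statement is false without that condition (it is present in Theorem~\ref{thmGG1-2GenFunc}), and your base partition $3+5+\cdots+(2m+1)$ of weight $m^2+2m$ is the correct minimal configuration. Second, both substantive steps are still only sketches: the claim that the even increments (giving $1/(q^2;q^2)_m$) and the parity-breaking odd insertions (giving $(-q;q^2)_m$) are independent and reach each admissible partition exactly once is precisely where the combinatorial work lies --- the paper's pair/singleton bookkeeping in Section~\ref{secGG} is evidence that such moves interact nontrivially with the difference conditions --- and the terminal identity $\sum_m q^{m^2+2m}(-q;q^2)_m/(q^2;q^2)_m = 1/(q^3,q^4,q^5;q^8)_\infty$ is Slater's, which you may simply cite rather than re-derive from a Bailey pair. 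Neither point is a wrong idea, but as written the proposal is a program rather than a proof.
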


We construct Andrews-Gordon like series for G\"{o}llnitz-Gordon identities as well, 
with some missing cases in section \ref{secGG}.  We do not have partition identities for the missing cases.  
The series is different from the standard ones \cite{Slater-GG}, 
or the double series given in \cite{AndrewsBringmannMahlburg}.  
\begin{align*}
 \sum_{n \geq 0} D_{2,2}(n) q^n 
 & = \sum_{n \geq 0} \frac{ q^{n^2} (-q; q^2)_n }{ (q^2; q^2)_n }, \\
 \sum_{n \geq 0} D_{2,1}(n) q^n 
 & = \sum_{n \geq 0} \frac{ q^{n(n+2)} (-q; q^2)_n }{ (q^2; q^2)_n }.  
\end{align*}
It is straightforward to account for the number of parts in the G\"{o}llnitz-Gordon partitions
and have 
\begin{align*}
 \sum_{n \geq 0} D_{2,2}(n, m) x^m q^n 
 & = \sum_{n \geq 0} \frac{ q^{n^2} (-q; q^2)_n x^n }{ (q^2; q^2)_n }, \\
 \sum_{n \geq 0} D_{2,1}(n, m) x^m q^n 
 & = \sum_{n \geq 0} \frac{ q^{n(n+2)} (-q; q^2)_n x^n }{ (q^2; q^2)_n }.  
\end{align*}

We collect the $q$-series identities versions of the presented results 
in the brief section \ref{secCorollaries}.  
We conclude with some directions for future research in section \ref{secConclusion}.  

\section{Capparelli's Identities and Some Missing Cases}
\label{secCapparelli}

\begin{theorem}[cf. the first Capparelli Identity]
\label{thmCapparelli1GenFunc}
  For $n, m \in \mathbb{N}$, 
  let $cp_1(n, m)$ denote the number of partitions of $n$ into $m$ parts such that 
  all parts are at least 2, the difference is at least 2 at distance 1 
  and it is at least 4 unless the successive parts add up to a multiple of 3.  
  Then, 
  \begin{equation}
  \label{eqCapparelli1GenFunc}
    \sum_{m, n \geq 0} cp_1(n, m) q^n x^m
    = \sum_{n_1, n_2 \geq 0} \frac{ q^{ 2 n_1^2 + 6 n_1 n_2 + 6 n_2^2 } x^{ n_1 + 2 n_2 } }
      { (q; q)_{n_1} (q^3; q^3)_{n_2} }
  \end{equation}
\end{theorem}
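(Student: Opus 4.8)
The plan is to establish the identity \eqref{eqCapparelli1GenFunc} by constructing an explicit bijection between the Capparelli partitions counted by $cp_1(n,m)$ and certain structured data that is manifestly encoded by the right-hand side. The key observation is that the double sum is an Andrews--Gordon type series in which the summation indices $n_1$ and $n_2$ must count two different \emph{kinds} of objects within a partition. Since the exponent of $x$ is $n_1 + 2n_2$, I expect $n_2$ to count \emph{paired} parts and $n_1$ to count \emph{lone} parts: a partition enumerated by $cp_1(n,m)$ should be decomposed into $n_2$ blocks of two consecutive parts whose sum is a multiple of three (these are the pairs exempted from the difference-4 condition) together with $n_1$ remaining single parts, so that $m = n_1 + 2n_2$. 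The first step, therefore, is to identify the \emph{base partition} for each fixed $(n_1, n_2)$ and verify that its weight is exactly $2n_1^2 + 6n_1 n_2 + 6n_2^2$.

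Next I would make precise the notion of base partition. For a configuration with $n_1$ lone parts and $n_2$ pairs, the minimal-weight Capparelli partition should interleave these so that the difference conditions are tight. A natural candidate orders the $n_2$ pairs and $n_1$ singletons and assigns each the smallest admissible values: each multiple-of-three pair contributes a block like $(3k-1) + (3k+1)$ or similar, while singletons sit at gaps of size $4$. I would compute the weight of this tightest arrangement and check it equals $2n_1^2 + 6n_1 n_2 + 6n_2^2$; the mixed term $6n_1 n_2$ should arise precisely from the interaction (forced shifts) between the lone parts and the pairs. This is essentially the same bookkeeping as in the classical derivation of \eqref{eqAndrewsGordonSeries}, where $N_r^2$ terms record tight staircases.

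With the base partition in hand, the third step is the \emph{moving parts} mechanism advertised in the abstract. The factor $\frac{1}{(q;q)_{n_1}}$ should generate an arbitrary partition into at most $n_1$ parts that is added onto the lone parts (shifting them upward while preserving all difference conditions), and the factor $\frac{1}{(q^3;q^3)_{n_2}}$ should generate a partition into at most $n_2$ parts, each a multiple of $3$, added onto the pairs so that each pair moves up by a multiple of three and hence keeps its sum divisible by $3$. I would verify that after any such move the Capparelli difference conditions (difference $\geq 2$, and $\geq 4$ unless the successive sum is $\equiv 0 \pmod 3$) are preserved, and that the process is reversible, yielding a bijection.

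The hard part will be proving that the moves are \emph{well defined and exhaustive} at the interfaces between lone parts and pairs: one must show that moving a pair by a multiple of three and a singleton by an arbitrary amount never violates the difference conditions, never causes two parts to collide or reorder in a way that changes the pair/singleton classification, and that every Capparelli partition arises from exactly one base configuration under exactly one sequence of moves. In particular I expect the delicate case analysis to concern adjacent parts that straddle a pair boundary, where whether the exemption (sum $\equiv 0 \pmod 3$) applies can flip as parts are shifted; establishing that the classification into $n_1$ singletons and $n_2$ pairs is canonical and invariant under the moves is where the real work lies.
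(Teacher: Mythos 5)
Your skeleton matches the paper's proof exactly: decompose a Capparelli partition into $n_2$ pairs and $n_1$ singletons, build the minimal base partition of weight $2n_1^2+6n_1n_2+6n_2^2$ (the paper uses $\beta = [2+4]+[8+10]+\cdots+[(6n_2-4)+(6n_2-2)]+(6n_2+2)+(6n_2+6)+\cdots+(6n_2+4n_1-2)$), and let $\frac{1}{(q;q)_{n_1}}$ drive moves on singletons and $\frac{1}{(q^3;q^3)_{n_2}}$ drive moves on pairs. But the proposal stops at precisely the point where the proof actually lives, and one of the few mechanisms you do commit to is wrong as stated. A ``move'' on a pair cannot be ``adding a multiple of three onto the pair'' in the sense of shifting both parts: starting from $[2,4]$ and adding the same amount to each part only ever produces pairs with difference $2$, so you would never reach pairs such as $[3k,3k+3]$ (difference $3$, sum divisible by $3$), which are legitimate and necessary configurations. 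The correct elementary move adds a \emph{total} of $3$ to the pair, distributed unevenly, alternating the pair's shape between $[3k-1,3k+1]$ and $[3k,3k+3]$; this is why the difference-$2$-or-$3$ dichotomy inside pairs arises at all.

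The three further pieces you defer to ``the hard part'' are not routine and constitute most of the paper's argument. First, when a pair's forward move would collide with a singleton, the paper performs an \emph{adjustment} in which the singleton and one member of the pair exchange roles (e.g.\ $[3k-1,3k+1],\,3k+5$ becomes $3k-1,\,[3k+3,3k+6]$), preserving weight; without this leapfrog the moves are simply not defined past a singleton, and the claim that ``the factor $1/(q;q)_{n_1}$ shifts singletons while preserving all difference conditions'' cannot be completed into a bijection. Second, the pair/singleton classification is genuinely ambiguous on streaks of consecutive multiples of $3$ (is $3k,3k+3,3k+6$ the pair $[3k,3k+3]$ plus singleton $3k+6$, or singleton $3k$ plus pair $[3k+3,3k+6]$?), and the paper must fix a convention that \emph{differs} depending on whether forward or backward moves are being applied in order for the two procedures to be mutually inverse. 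Third, the order of operations matters: singleton moves are applied first (largest to largest), then pair moves from the largest pair down, with the monotonicity $\eta_1\le\eta_2\le\cdots$ guaranteed by the fact that one forward move on a larger pair frees up exactly one forward move on the pair before it. None of these is supplied or even correctly anticipated in the proposal, so as written it is a plausible plan rather than a proof.
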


\begin{proof}
  We will show that each partition $\lambda$ counted by $cp_1(n, m)$ 
  corresponds to a triple of partitions $(\beta, \mu, \eta)$ 
  which are described as follows.  
  
  \begin{align}
    \label{eqDefBasePtn}
    \beta = & [2 + 4] + [8 + 10] + \cdots + [(6 n_2 - 4) + (6 n_2 - 2)] \\
    \nonumber
      + & (6 n_2 + 2) + (6 n_2 + 6) + \cdots + (6 n_2 + 4 n_1 - 2), 
  \end{align}
  namely the \emph{base partition} which has $n_2$ \emph{pairs} with difference 2, 
  and $n_1$ \emph{singletons} whose pairwise difference is at least 4. 
  The pairs are shown in brackets for convenience.  
  The number of parts of $\beta$ is $m = n_1 + 2 n_2$.  
  Observe that $\beta$ satisfies the conditions set forth by $cp_1(n, m)$.  
  The weight of $\beta$ is
  \[
    6 \left[1 + 3 + \cdots + (2n_2 - 1)\right] 
    + \left[ n_1(6 n_2 + 2) + 4 \binom{n_1}{2} \right] = 6n_2^2 + 6 n_2 n_1 + 2 n_1^2, 
  \]
  
  $\mu$ is a partition with $n_1$ parts, counting zeros;  
  and $\eta$ is a partition into multiples of 3 with $n_2$ parts, counting zeros.  
  
  At this point, it is clear that 
  
  \begin{equation}
  \label{eqRawGenFunc}
    \sum_{ \substack{n_1, n_2 \geq 0 \\ \mu, \eta} } 
      q^{ \vert \beta \vert + \vert \mu \vert + \vert \eta \vert }
      x^{ n_1 + 2 n_2 } 
    = \sum_{n_1, n_2 \geq 0} \frac{ q^{ 2 n_1^2 + 6 n_1 n_2 + 6 n_2^2 } x^{ n_1 + 2 n_2 } }
      { (q; q)_{n_1} (q^3; q^3)_{n_2} }
  \end{equation}
  
  We will obtain a unique $\lambda$ from $(\beta, \mu, \eta)$ by a series of forward moves, 
  and a unique $(\beta, \mu, \eta)$ from $\lambda$ by a series of backward moves.  
  Showing that the sequences of forward and backward moves are inverses of each other will yield 
  \begin{equation*}
    \sum_{m, n \geq 0} cp_1(n, m) q^n x^m
    = \sum_{ \substack{n_1, n_2 \geq 0 \\ \mu, \eta} } 
      q^{ \vert \beta \vert + \vert \mu \vert + \vert \eta \vert }
      x^{ n_1 + 2 n_2 }, 
  \end{equation*}
  completing the proof.  
  
  Parts of $\mu$ will pertain to singletons in a partition enumerated by $cp_1(n, m)$, 
  whose distances to preceding or suceeding parts are greater than or equal to 3, but not both 3.  
  
  Parts of $\eta$ will pertain to pairs in a partition enumerated by $cp_1(n, m)$, 
  a pair of parts differing by 2 or 3.  
  Let's keep in mind that we indicate pairs by putting brackets around them.    
  
  To resolve the potential overlap between pairs and singletons, 
  notice that the only ambiguity arises when a partition counted by $cp_1(n, m)$ 
  has a streak of consecutive multiples of 3:  
  \[
    (\textrm{parts } \leq 3k - 4), 
    3k, 3k+3, \ldots, 3k+3l, 
    (\textrm{parts } \geq 3k+3l+4).  
  \]
  If $l$ is odd, the pairs are obvious:  
  \[
    (\textrm{parts } \leq 3k - 4), 
    [3k, 3k+3], [3k+6, 3k+9], 
    \ldots
  \]
  \[
    \ldots, 
    [3k+3l-3, 3k+3l], 
    (\textrm{parts } \geq 3k+3l+4).  
  \]
  If $l$ is even and we are implementing forward moves on pairs, 
  we declare $3k$ as a singleton:  
  \[
    (\textrm{parts } \leq 3k - 4), 
    3k, [3k+3, 3k+6], [3k+9, 3k+12], 
    \ldots
  \]
  \[
    \ldots, 
    [3k+3l-1, 3k+3l], 
    (\textrm{parts } \geq 3k+3l+4).  
  \]
  Else if $l$ is even and we are not implementing forward moves on pairs, 
  we declare $3k+3l$ as a singleton:  
  \[
    (\textrm{parts } \leq 3k - 4), 
    [3k, 3k+3], [3k+6, 3k+9], 
    \ldots
  \]
  \[
    \ldots, 
    [3k+3l-6, 3k+3l-3], 3k+3l, 
    (\textrm{parts } \geq 3k+3l+4).  
  \]
  This way, we will be able to mention \emph{the $k$th largest (or smallest)} pair (or singleton) 
  without confusion.  
  The nature of moves, forward or backward, will be clear from context.  
  
  Given $(\beta, \mu, \eta)$,  we first add the $i$th largest part of $\mu$ 
  to the $i$th largest singleton in $\beta$ for $i = 1, 2, \ldots, n_1$ in this order.  
  These are the forward moves on singletons.  
  The pairs are intact, and the singletons have pairwise difference greater than or equal to 4
  after the forward moves on singletons, 
  so the difference condition given by $cp_1(n, m)$ is preserved.  
  
  Then, we move the $i$th largest pair in the resulting partition
  $\frac{1}{3}\cdot$ (the $i$th largest part of $\eta$) times forward 
  for $i = 1, 2, \ldots, n_2$, in this order.  
  One forward move will add 3 to the sum of the parts in a pair.  
  
  We now define the forward moves on pairs.  There are several cases.  
  Here and elsewhere, 
  we indicate the pair or singleton being moved in boldface.  
  
  {\bf (case Ia)}
  \begin{align*}
    (\textrm{parts } \leq 3k-5), & [\mathbf{3k-1, 3k+1}], (\textrm{parts } \geq 3k+6) \\[3pt]
    & \big\downarrow \textrm{ one forward move} \\[3pt]
    (\textrm{parts } \leq 3k-5), & [\mathbf{3k, 3k+3}], (\textrm{parts } \geq 3k+6) 
  \end{align*}
  Here, there is a potential regrouping for determining pairs if there is a $3k+6$.  
  
  {\bf (case IIa)}
  \begin{align*}
    (\textrm{parts } \leq 3k-3), & [\mathbf{3k, 3k+3}], (\textrm{parts } \geq 3k+8) \\[3pt]
    & \big\downarrow \textrm{ one forward move} \\[3pt]
    (\textrm{parts } \leq 3k-3), & [\mathbf{3k+2, 3k+4}], (\textrm{parts } \geq 3k+8) 
  \end{align*}
  
  {\bf (case Ib)}
  \begin{align*}
    (\textrm{parts } \leq 3k-5), & [\mathbf{3k-1, 3k+1}], 3k+5, (\textrm{parts } \geq 3k+9) \\[3pt]
    & \big\downarrow \textrm{ one forward move} \\[3pt]
    (\textrm{parts } \leq 3k-5), & [\mathbf{3k}, \underbrace{\mathbf{3k+3}], 3k+5}_{!}, (\textrm{parts } \geq 3k+9) 
    \textrm{ (temporarily)} \\[3pt]
    & \big\downarrow \textrm{ adjustment} \\[3pt]
    (\textrm{parts } \leq 3k-5), & 3k-1, [\mathbf{3k+3, 3k+6}], (\textrm{parts } \geq 3k+9) 
  \end{align*}
  Here again, there is a potential regrouping of pairs if there is a $3k+9$.  
  
  {\bf (case IIb)}
  \begin{align*}
    (\textrm{parts } \leq 3k-3), & [\mathbf{3k, 3k+3}], 3k+7, (\textrm{parts } \geq 3k+11) \\[3pt]
    & \big\downarrow \textrm{ one forward move} \\[3pt]
    (\textrm{parts } \leq 3k-3), & [\mathbf{3k+2}, \underbrace{\mathbf{3k+4}], 3k+7}_{!}, (\textrm{parts } \geq 3k+11) 
    \textrm{ (temporarily)} \\[3pt]
    & \big\downarrow \textrm{ adjustment} \\[3pt]
    (\textrm{parts } \leq 3k-3), & 3k+1, [\mathbf{3k+5, 3k+7}], (\textrm{parts } \geq 3k+11) 
  \end{align*}
  
  It is prudent to remind the reader again that any forward move on a pair 
  increases the weight of the partition by 3.  
  The adjustments do not alter the weight.  
  To see that the difference conditions are retained, 
  one simply checks the difference between successive parts.  
  
  Because the $i$th largest pair is moved 
  $\frac{1}{3}\cdot$($i$th largest part of $\eta$) times forward, 
  a larger pair is moved forward at least as many times as the smaller pair preceding it.  
  One can incorporate immediately preceding pairs to the above cases 
  and show that one forward move on the larger pair allows one forward move of the preceding smaller pair.  
  
  For example, in case {\bf (Ib)}, the immediately preceding pair could have been $[3k-5, 3k-5]$.  
  
  \begin{align*}
    (\textrm{parts } \leq 3k-11), & [3k-7, 3k-5], [\mathbf{3k-1, 3k+1}], 3k+5, (\textrm{parts } \geq 3k+9) \\[3pt]
    & \big\downarrow \textrm{ one forward move on the larger pair and adjustment} \\[3pt]
    (\textrm{parts } \leq 3k-11), & [\mathbf{3k-7, 3k-5}], 3k-1, [3k+3, 3k+6], (\textrm{parts } \geq 3k+9) \\[3pt]
    & \big\downarrow \textrm{ one forward move on the smaller pair and adjustment} \\[3pt]
    (\textrm{parts } \leq 3k-11), & 3k-7, [\mathbf{3k-3, 3k}], 3k-1, [3k+3, 3k+6], (\textrm{parts } \geq 3k+9), 
  \end{align*}
  followed by a potential regrouping of pairs.  
  
  Cases {\bf (Ia)}, {\bf (IIa)}, {\bf (Ib)}, and {\bf (IIb)} exhaust all possibilities.  
  A seemingly forgotten one is 
  \[
    (\textrm{parts } \leq 3k-3), [\mathbf{3k, 3k+3}], 3k+6, (\textrm{parts } \geq 3k+9).  
  \]
  However, if there is a $3k+9$ as well, this means that the pairs $[3k, 3k+3]$ and $[3k+6, 3k+9]$
  had been moved forward an equal number of times, 
  so the smaller pair is not allowed an extra move.  
  Or the pairing stipulates that $[3k+3, 3k+6]$ is a pair and $3k$ is a singleton.  
  
  We have now produced a partition $\lambda$ which is enumerated by $cp_1(n, m)$.  
  
  Conversely, given a partition enumerated by $cp_1(n, m)$, we first determine the pairs and singletons.  
  The pairs are the pair of parts whose pairwise distance is 2 or 3.  
  If there is a streak of multiples of 3, pair them beginning at the smaller end, 
  so the largest of them is left out.  Call the number of pairs $n_2$.  
  The remaining $n_1$ parts are singletons.  
  Since $\lambda$ satisfies the condition set forth by $cp_1(n, m)$, 
  the sum of pairs is a multiple of 3, 
  and any singleton has distance greater than or equal to 3 
  to its preceding and succeding parts (not both equal 3).  
  
  Start with the smallest pair $[3k+2 ,3k+4]$ or $[3k, 3k+3]$, 
  and perform $\frac{\eta_1}{3}$ backward moves on it until it becomes $[2, 4]$, 
  thus determining $\eta_1$, the smallest part of $\eta$.  
  If the smallest pair is already $[2, 4]$, set $\eta_1 = 0$.  
  
  The backward moves will fall in one of the following cases.  
  
  {\bf (case I'a)}
  \begin{align*}
    (\textrm{parts } \leq 3k-5), & [\mathbf{3k, 3k+3}], (\textrm{parts } \geq 3k+6) \\[3pt]
    & \big\downarrow \textrm{ one backward move} \\[3pt]
    (\textrm{parts } \leq 3k-5), & [\mathbf{3k-1, 3k+1}], (\textrm{parts } \geq 3k+6) 
  \end{align*}
  
  {\bf (case II'a)}
  \begin{align*}
    (\textrm{parts } \leq 3k-3), & [\mathbf{3k+2, 3k+4}], (\textrm{parts } \geq 3k+8) \\[3pt]
    & \big\downarrow \textrm{ one backward move} \\[3pt]
    (\textrm{parts } \leq 3k-3), & [\mathbf{3k, 3k+3}], (\textrm{parts } \geq 3k+8), 
  \end{align*}
  with a potential regrouping of pairs if there is a $3k-3$.  
  
  {\bf (case I'b)}
  \begin{align*}
    (\textrm{parts } \leq 3k-5), & 3k-1, [\mathbf{3k+3, 3k+6}], (\textrm{parts } \geq 3k+9) \\[3pt]
    & \big\downarrow \textrm{ one backward move} \\[3pt]
    (\textrm{parts } \leq 3k-5), & \underbrace{3k-1, [\mathbf{3k+2} }_{!}, \mathbf{3k+4}], (\textrm{parts } \geq 3k+9) 
    \textrm{ (temporarily)} \\[3pt]
    & \big\downarrow \textrm{ adjustment} \\[3pt]
    (\textrm{parts } \leq 3k-5), & [\mathbf{3k-1, 3k+1}], 3k+5, (\textrm{parts } \geq 3k+9) 
  \end{align*}
  
  {\bf (case II'b)}
  \begin{align*}
    (\textrm{parts } \leq 3k-3), & 3k+1, [\mathbf{3k+5, 3k+7}], (\textrm{parts } \geq 3k+11) \\[3pt]
    & \big\downarrow \textrm{ one backward move} \\[3pt]
    (\textrm{parts } \leq 3k-3), & \underbrace{3k+1, [\mathbf{3k+3} }_{!}, \mathbf{3k+6}] (\textrm{parts } \geq 3k+11) 
    \textrm{ (temporarily)} \\[3pt]
    & \big\downarrow \textrm{ adjustment} \\[3pt]
    (\textrm{parts } \leq 3k-3), & [\mathbf{3k, 3k+3}], 3k+7, (\textrm{parts } \geq 3k+11) 
  \end{align*}
  Here again, there is a potential regoruping of pairs if there is a $3k-3$.  
  
  Notice that the only hurdles ahead of the smallest pair are the singletons, 
  which can be tackled by cases {\bf (I'b)} and {\bf (II'b)}.  
  Analogous to the implementation of the forward moves, 
  any backward move on a pair allows a backward move on the immediately succeeding pair.  
  
  Once the smallest pair is stowed as $[2, 4]$ 
  and the required number of moves is recorded as $\frac{\eta_1}{3}$, 
  we continue with the next smallest pair.  
  We perform $\frac{\eta_2}{3}$ backward moves on it until it becomes $[8, 10]$, 
  thus determining $\eta_2$ and so on.  
  The previous paragraph shows that .  
  \[
    \eta_1 \leq \eta_2 \leq \cdots \leq \eta_{n_2}.  
  \]
  
  When all backward moves are performed, the intermediate partition looks like 
  \[
    [2, 4], [8, 10], \ldots, [6n_2 - 4, 6n_2 - 2], \textrm{ singletons } \geq 6n_2 + 2
  \]
  and we have a partition $\eta$ with at most $n_2$ parts into multiples of 3.  
  At this point, the distances between singletons are at least 4, 
  because distance 3 indicates a pair which is not accounted for.  
  
  To make the singletons 
  \[
    6n_2 + 2, 6n_2 + 6, \ldots, 6n_2 + 4n_1 - 2, 
  \]
  we subtract $\mu_1$ from the smallest singleton, $\mu_2$ from the next smallest, and so on.  
  This  will give us the base partition $\beta$, 
  and the partition $\mu$ into at most $n_1$ parts.  
  It is evident that 
  \[
    \mu_1 \leq \mu_2 \leq \cdots \leq \mu_{n_1}.  
  \]
  
  To finish the proof, we notice that the corresponding cases for the backward and forward moves 
  have switched inputs and outputs.  
  The forward moves are performed on singletons first, from the largest to smallest, 
  using parts of $\mu$ from largest to smallest, respectively; 
  then on pairs next, from largest to smallest, using $\frac{1}{3}\times$ parts of $\eta$, 
  from largest to smallest, respectively.  
  The backward moves are performed in the exact reverse order.  
  Therefore, the $\lambda$'s enumerated by $cp_1(n, m)$ 
  are in 1-1 correspondence with the triples $(\beta, \mu, \eta)$, as desired.  
\end{proof}

It is possible to make the backward and forward moves \emph{exact} opposites, 
including the rulebreaking in the middle.  
However, we prefer the more intuitive versions described in the proof.  

% begin example 

 {\bf Example: }
 With the notation in the proof of Theorem \ref{thmCapparelli1GenFunc}, 
 Let $\beta$ be the base partition with 2 pairs and 2 singletons, 
 $\mu = 1+2$, and $\eta = 3+9$.  
 \begin{align*}
  \beta = [2, 4], [8, 10], 14, \mathbf{18}
 \end{align*}
 The weight of $\beta$ is 56.  
 After incorporating parts of $\mu$ as forward moves on the singletons, 
 the intermediate partition becomes
 \begin{align*}
  [2, 4], [\mathbf{8, 10}], 15, 20.  
 \end{align*}
  We now perform $\frac{1}{3}\eta_2 = 3$ 
  forward moves on the larger pair $[8, 10]$.  
  \begin{align*}
   \big\downarrow \textrm{ the first forward move on the larger pair }
  \end{align*}
 \begin{align*}
  [2, 4], [\mathbf{9, 12}], 15, 20  
 \end{align*}
  \begin{align*}
   \big\downarrow \textrm{ regrouping the pairs }
  \end{align*}
 \begin{align*}
  [2, 4], 9, [\mathbf{12, 15}], 20  
 \end{align*}
  \begin{align*}
   \big\downarrow \textrm{ the second forward move on the larger pair }
  \end{align*}
 \begin{align*}
  [2, 4], 9, [\mathbf{14, 16}], 20  
 \end{align*}
  \begin{align*}
   \big\downarrow \textrm{ the third, and the last, forward move on the larger pair }
  \end{align*}
 \begin{align*}
  [2, 4], 9, [\mathbf{15}, \underbrace{\mathbf{18}], 20 }_{!} 
 \end{align*}
  \begin{align*}
   \big\downarrow \textrm{ adjustment }
  \end{align*}
 \begin{align*}
  [\mathbf{2, 4}], 9, 14, [18, 21] 
 \end{align*}
 We now perform $\frac{1}{3}\eta_1 = 1$ forward moves on the smaller pair.  
  \begin{align*}
   \big\downarrow \textrm{ one forward move on the smaller pair }
  \end{align*}
 \begin{align*}
  [\mathbf{3, 6}], 9, 14, [18, 21] 
 \end{align*}
  \begin{align*}
   \big\downarrow \textrm{ regrouping pairs }
  \end{align*}
 \begin{align*}
  3, [\mathbf{6, 9}], 14, [18, 21] 
 \end{align*}
 This final partition is $\lambda$.  
 Its weight is $71 = \vert \lambda \vert = \vert \beta \vert + \vert \mu \vert + \vert \eta \vert = 
 56 + 3 + 12$ indeed.  
 
%   end example 

\begin{theorem}[cf. the second Capparelli Identity]
\label{thmCapparelli2GenFunc}
  For $n, m \in \mathbb{N}$, 
  let $cp_2(n,m)$ be the number of partitions of $n$ into $m$ parts 
  with no occurrences of the part 2, 
  such that the pairwise difference is at least two, 
  and at least four unless the successive parts add up to a multiple of three.  
  Then, 
  \begin{align}
  \label{eqGFCp2}
    \sum_{m, n \geq 0} cp_2(m,n) q^n x^m 
    = \sum_{n_1, n_2 \geq 0} \frac{ q^{6n_2^2 + 3n_2 + 2n_1^2 + n_1 + 6n_2 n_1} (1 + xq^{3n_2 + 2n_1 + 1}) x^{2n_2 + n_1} }
      { (q; q)_{n_1} (q^3; q^3)_{n_2} }
  \end{align}
\end{theorem}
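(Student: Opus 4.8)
The plan is to run the same base-partition-plus-moves machinery used in the proof of Theorem \ref{thmCapparelli1GenFunc}, the one genuinely new phenomenon being that the part $2$ is now forbidden while the part $1$ is allowed. That dichotomy is exactly what the factor $(1 + xq^{3n_2 + 2n_1 + 1})$ records: I would split the partitions counted by $cp_2(n,m)$ according to whether or not they contain a part equal to $1$, let the ``$1$'' in the factor govern the first family and ``$xq^{3n_2+2n_1+1}$'' the second, and treat the two families with parallel constructions. The point that makes this clean is that $1 + p$ is never a multiple of $3$ for an admissible $p$ (one checks $p=3,4$ both fail the difference condition), so a part $1$ can only be a singleton and must be followed by a part $\geq 5$; hence a part $1$, when present, is a \emph{frozen} smallest part that is never touched by any move.

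First I would set up two families of base partitions. For the ``$1$'' summand I use
\begin{align*}
 \beta' = & [3 + 6] + [9 + 12] + \cdots + [(6n_2 - 3) + 6n_2] \\
 & + (6n_2 + 3) + (6n_2 + 7) + \cdots + (6n_2 + 4n_1 - 1),
\end{align*}
which has $n_2$ pairs and $n_1$ singletons, $n_1 + 2n_2$ parts, smallest part at least $3$, and weight $6n_2^2 + 3n_2 + 2n_1^2 + n_1 + 6n_2 n_1$. For the ``$xq^{3n_2+2n_1+1}$'' summand I adjoin the frozen $1$ and shift the remaining parts upward:
\begin{align*}
 \beta'' = & 1 + [5 + 7] + [11 + 13] + \cdots + [(6n_2 - 1) + (6n_2 + 1)] \\
 & + (6n_2 + 5) + (6n_2 + 9) + \cdots + (6n_2 + 4n_1 + 1),
\end{align*}
which has one extra part and weight $\vert \beta' \vert + 3n_2 + 2n_1 + 1$, precisely producing the factor $xq^{3n_2+2n_1+1}$. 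Both $\beta'$ and $\beta''$ visibly satisfy the conditions of $cp_2(n,m)$; the forced lower bound $5$ on the part after the frozen $1$ is what makes $[5,7]$ the minimal admissible pair and $6n_2+5$ the minimal singleton in $\beta''$.

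Next, exactly as before, I take $\mu$ a partition with $n_1$ parts (counting zeros) and $\eta$ a partition into multiples of $3$ with $n_2$ parts (counting zeros), so that
\begin{align*}
 \sum_{\substack{n_1, n_2 \geq 0 \\ \mu, \eta}} q^{\vert \beta' \vert + \vert \mu \vert + \vert \eta \vert} x^{n_1 + 2n_2}
 + \sum_{\substack{n_1, n_2 \geq 0 \\ \mu, \eta}} q^{\vert \beta'' \vert + \vert \mu \vert + \vert \eta \vert} x^{n_1 + 2n_2 + 1}
\end{align*}
is visibly the right-hand side of \eqref{eqGFCp2}. I then recycle verbatim the forward moves on singletons (add the $i$th largest part of $\mu$ to the $i$th largest singleton, from largest to smallest) and the forward moves on pairs (cases {\bf (Ia)}, {\bf (IIa)}, {\bf (Ib)}, {\bf (IIb)}, with the same temporary rule-breaking, adjustments, and regroupings of streaks of multiples of $3$), now applied to $\beta'$, or to the non-$1$ part of $\beta''$. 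Since every forward move strictly increases parts and the minimal pair of $\beta''$ is already $[5,7]$, no move can ever manufacture a part equal to $2$ or reach down to the frozen $1$, which $\mu$ also ignores.

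Finally I would check that the forward and backward moves are mutually inverse and that the two families partition all of $cp_2(n,m)$. Given a $\lambda$ counted by $cp_2(n,m)$, its having a part $1$ or not decides the family; in either case one reads off pairs (parts at distance $2$ or $3$, pairing a streak of multiples of $3$ from the smaller end) and singletons, runs the backward moves (cases {\bf (I'a)}, {\bf (II'a)}, {\bf (I'b)}, {\bf (II'b)}) to stow the pairs as $[3,6], [9,12], \ldots$ respectively $[5,7], [11,13], \ldots$ while recording $\eta$, and subtracts the singletons to recover $\beta'$ (respectively $\beta''$) together with $\mu$. The argument that the forward and backward passes reverse each other is identical to Theorem \ref{thmCapparelli1GenFunc}. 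The one place this proof differs in substance, and where I expect the only real bookkeeping, is verifying that the two families are genuinely disjoint and exhaustive — every $cp_2$ partition containing a $1$ comes from $\beta''$ and every one avoiding $1$ comes from $\beta'$ — together with the check that the moves at the bottom of the partition never create a forbidden $2$ nor disturb the frozen $1$.
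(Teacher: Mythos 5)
Your proposal is correct and follows essentially the same route as the paper: the same split according to whether a part $1$ occurs, the same two base partitions (your $\beta'$ and $\beta''$ are exactly the paper's \eqref{baseptncp2beta2} and \eqref{baseptncp2beta1}, with the frozen, never-moved $1$), and the same reduction of all remaining work to the moves of Theorem \ref{thmCapparelli1GenFunc}. Your weight computation $\vert\beta''\vert = \vert\beta'\vert + 3n_2 + 2n_1 + 1$ accounting for the factor $xq^{3n_2+2n_1+1}$, and the check that $1$ can only be a singleton followed by a part $\geq 5$, match the paper's (much terser) argument.
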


\begin{proof}
 The proof is similar to that of Theorem \ref{thmCapparelli1GenFunc}, 
 except that one needs to distinguish the cases in which a partition $\lambda$ 
 enumerated by $cp_2(n, m)$ contains a 1 or not.  
 The respective base partitions are 
 \begin{align}
 \label{baseptncp2beta1}
  \beta = 1, [5, 7], [11, 13], \ldots [6n_2 - 1, 6n_2 + 1], 6n_2 + 5, 6n_2 + 9, \ldots , 6n_2 + 4n_1 + 1
 \end{align}
 with weight $6n_2^2 + 6n_2 + 2n_1^2 + 3n_1 + 6n_2 n_1 + 1$, or
 \begin{align}
 \label{baseptncp2beta2}
  \beta = [3, 6], [9, 12], \ldots [6n_2 - 3, 6n_2], 6n_2 + 3, 6n_2 + 7, \ldots , 6n_2 + 4n_1 - 1 
 \end{align}
 with weight $6n_2^2 + 3n_2 + 2n_1^2 + n_1 + 6n_2 n_1$.  
 
 In \eqref{baseptncp2beta1}, the singleton 1 is not moved at all, but the other $n_1$ singletons may be.  
 In either \eqref{baseptncp2beta1} or \eqref{baseptncp2beta2}, the pairs are indicated in brackets.  
 The respective generating functions using the base partitions \eqref{baseptncp2beta1} or \eqref{baseptncp2beta2} 
 are constructed as in the proof of Theorem \ref{thmCapparelli1GenFunc}.  
 
 Observe that the generating function for \eqref{baseptncp2beta1} has an extra factor $xq$ 
 due to the presence of the unmoved part 1.  
 The sum of the obtained series gives the result.  
\end{proof}

% begin example

  {\bf Example: } 
  Using the notation in the proof of Theorem \ref{thmCapparelli2GenFunc}, 
  let's take 
  \begin{align*}
   \lambda = [\mathbf{3, 6}], 9, 14, [18, 21]  
  \end{align*}
  The pair $[3, 6]$ is chosen over $[6, 9]$, 
  because we will not be performing forward moves on pairs.  
  This partition counted by $cp_2(71, 6)$ has no 1's, 
  therefore, we will be aiming at the base partition \eqref{baseptncp2beta2}. 
  The smaller pair is already in its terminal place, so we set $\eta_1 = 0$.  
  To determine $\eta_2$, we move the larger pair backwards and record the number of moves.  
  \begin{align*}
    \big\downarrow \textrm{ one backward move on the larger pair }
  \end{align*}
  \begin{align*}
   [3, 6], 9, \underbrace{14, [\mathbf{17}}_{!}, \mathbf{19}]  
  \end{align*}
  \begin{align*}
    \big\downarrow \textrm{ adjustment }
  \end{align*}
  \begin{align*}
   [3, 6], 9, [\mathbf{14, 16}], 20  
  \end{align*}
  \begin{align*}
    \big\downarrow \textrm{ another backward move on the larger pair }
  \end{align*}
  \begin{align*}
   [3, 6], 9, [\mathbf{12, 15}], 20  
  \end{align*}
  \begin{align*}
    \big\downarrow \textrm{ regrouping pairs }
  \end{align*}
  \begin{align*}
   [3, 6], [9, 12], \mathbf{15}, 20  
  \end{align*}
  We made two backward moves on the larger pair to stow it in its place.  
  We now know that $\eta = 0 + 6$.  
  Recall that one backward move on a pair drops the weight by three.  
  
  Finally, we see that the smaller singleton is already as small as it can be.  
  The larger can be moves backward one time, though.  
  Therefore, $\mu = 0 + 1$, and we have 
  \begin{align*}
   \beta = [3, 6], [9, 12], 15, \mathbf{19},   
  \end{align*}
  $\mu = 0+1$, and $\eta = 0+6$.  
  The weight of $\beta$ is 64.  
  
% end example

It is possible to alter the difference conditions in Capparelli's identities, 
and write generating functions for the resulting classes of partitions.  
Those generating functions may or may not give rise to partition identities, though.  
In other words, it may or may not be possible to express the constructed series as nice infinite products.  
We give several examples below, with hints for their proofs.  
The proofs are similar to proofs of Theorems \ref{thmCapparelli1GenFunc} and \ref{thmCapparelli2GenFunc}.  

\begin{theorem}
\label{thmcp0}
  For $n, m \in \mathbb{N}$, 
  let $cp_0(n, m)$ be the number of partitions of $n$ into $m$ parts 
  with pairwise difference of parts is at least two, 
  and it is at least four unless the successive parts add up to a multiple of three.  
  Then, 
  \begin{align*}
%   \label{eqGFCp0}
    \sum_{m, n \geq 0} cp_0(m,n) q^n x^m 
    = \sum_{n_1, n_2 \geq 0} \frac{ q^{6n_2^2 + 2n_1^2 + 6n_2 n_1} (1 + xq^{6n_2 + 3n_1 + 1}) x^{2n_2 + n_1} }
      { (q; q)_{n_1} (q^3; q^3)_{n_2} }
  \end{align*}
\end{theorem}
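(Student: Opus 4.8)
The plan is to split the partitions enumerated by $cp_0(n,m)$ according to whether their smallest part equals $1$, and to recognize each resulting class as one already treated. A partition counted by $cp_0(n,m)$ satisfies the same Capparelli difference conditions as before, but now with no restriction on its smallest part. Since successive parts differ by at least $2$, the value $1$ can occur at most once, and whenever it occurs it is forced to be the smallest part. Thus the partitions counted by $cp_0(n,m)$ fall into two disjoint and exhaustive classes: those that avoid the part $1$ entirely, and those whose smallest part is $1$.

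The partitions avoiding $1$ have every part at least $2$, so they are precisely the partitions enumerated by $cp_1(n,m)$. By Theorem~\ref{thmCapparelli1GenFunc} they contribute
\begin{equation*}
  \sum_{n_1, n_2 \geq 0} \frac{ q^{ 2 n_1^2 + 6 n_1 n_2 + 6 n_2^2 } x^{ n_1 + 2 n_2 } }{ (q; q)_{n_1} (q^3; q^3)_{n_2} },
\end{equation*}
which is exactly the term corresponding to the $1$ in the factor $(1 + x q^{6 n_2 + 3 n_1 + 1})$ of the claimed series.

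For the partitions whose smallest part is $1$, I would first observe that the presence of a $1$ rules out the part $2$, since $2-1=1$ violates the minimal difference; hence such partitions automatically satisfy the hypotheses defining $cp_2(n,m)$, and conversely every partition counted by $cp_2(n,m)$ that contains a $1$ has smallest part $1$. Consequently this class coincides verbatim with the ``contains a $1$'' subcase treated in the proof of Theorem~\ref{thmCapparelli2GenFunc}, governed by the base partition \eqref{baseptncp2beta1},
\begin{equation*}
  \beta = 1,\, [5, 7],\, [11, 13],\, \ldots,\, [6n_2 - 1, 6n_2 + 1],\, 6n_2 + 5,\, \ldots,\, 6n_2 + 4n_1 + 1,
\end{equation*}
in which the singleton $1$ is never moved while the remaining parts are shuffled by exactly the forward and backward moves on pairs and singletons already developed. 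Since this base partition has weight $6 n_2^2 + 6 n_2 + 2 n_1^2 + 3 n_1 + 6 n_2 n_1 + 1$ and $2 n_2 + n_1 + 1$ parts, the same bijection with triples $(\beta, \mu, \eta)$ makes this class contribute
\begin{equation*}
  \sum_{n_1, n_2 \geq 0} \frac{ q^{ 2 n_1^2 + 6 n_1 n_2 + 6 n_2^2 + 6 n_2 + 3 n_1 + 1 } x^{ n_1 + 2 n_2 + 1 } }{ (q; q)_{n_1} (q^3; q^3)_{n_2} },
\end{equation*}
which is precisely the $x q^{6 n_2 + 3 n_1 + 1}$ summand. Adding the two contributions and combining them over the common prefactor $q^{ 2 n_1^2 + 6 n_1 n_2 + 6 n_2^2 } x^{ n_1 + 2 n_2 }/\big( (q; q)_{n_1} (q^3; q^3)_{n_2} \big)$ yields the stated generating function.

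The verifications that the moves preserve the difference conditions and that the forward and backward moves are mutually inverse are identical to those carried out for Theorems~\ref{thmCapparelli1GenFunc} and \ref{thmCapparelli2GenFunc}, so I would not repeat them. The one point requiring genuine care — and the main obstacle — is confirming that the fixed part $1$ never interferes with the movable parts: a part following $1$ must differ from it by at least $4$, because both $1+3=4$ and $1+4=5$ fail to be multiples of $3$, which is exactly why the pairs in \eqref{baseptncp2beta1} begin at $[5,7]$ and the movable singletons at $6n_2+5$. Since these are the terminal (smallest) positions and every move only increases a part or leaves it fixed in its terminal spot, the gap of at least $4$ below the first movable part is preserved throughout; in particular $1$ is never drawn into a pair or a regrouping, so it is genuinely inert and the two subcases assemble without overlap or interference.
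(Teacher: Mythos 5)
Your proposal is correct and matches the paper's proof in substance: the paper likewise splits on whether the partition contains a $1$, using the base partition $1, [5,7], [11,13], \ldots$ (identical to \eqref{baseptncp2beta1}) with the $1$ held fixed for the first class, and the $cp_1$ base partition $[2,4],[8,10],\ldots$ for the second. Your reduction of the two classes to Theorem~\ref{thmCapparelli1GenFunc} and to the ``contains a $1$'' subcase of Theorem~\ref{thmCapparelli2GenFunc} is just a slightly more explicit packaging of the same argument.
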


\begin{proof}
 We follow the proof of Theorem \ref{thmCapparelli2GenFunc}, but use the base partitions 
 \begin{align}
 \label{baseptncp0beta1}
  \beta = 1, [5, 7], [11, 13], \ldots [6n_2 - 1, 6n_2 + 1], 6n_2 + 5, 6n_2 + 9, \ldots , 6n_2 + 4n_1 + 1
 \end{align}
 with weight $6n_2^2 + 6n_2 + 2n_1^2 + 3n_1 + 6n_2 n_1 + 1$, or
 \begin{align}
 \label{baseptncp0beta2}
  \beta = [2, 4], [8, 10], \ldots [6n_2 - 4, 6n_2-2], 6n_2 + 2, 6n_2 + 6, \ldots , 6n_2 + 4n_1 - 2 
 \end{align}
 with weight $6n_2^2 + 2n_1^2 + 6n_2 n_1$.  
 We use \eqref{baseptncp0beta1} for partitions containing a 1.  That 1 is not moved throughout.  
\end{proof}

\begin{theorem}
\label{thmcp1-1}
  For $n, m \in \mathbb{N}$, 
  let $cp_{1-1}(n, m)$ be the number of partitions of $n$ into $m$ parts 
  with pairwise difference of parts is at least two, 
  and it is at least four unless the successive parts add up to $\equiv 1 \pmod{3}$.  
  Then, 
  \begin{align*}
%   \label{eqGFCp1-1}
    \sum_{m, n \geq 0} cp_{1-1}(m,n) q^n x^m 
    = \sum_{n_1, n_2 \geq 0} \frac{ q^{6n_2^2 - 2n_2 + 2n_1^2 - n_1 + 6n_2 n_1} x^{2n_2 + n_1} }
      { (q; q)_{n_1} (q^3; q^3)_{n_2} }
  \end{align*}
\end{theorem}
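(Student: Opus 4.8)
The plan is to follow the architecture of the proof of Theorem \ref{thmCapparelli1GenFunc} essentially verbatim, with the residue class $0$ replaced by $1$ throughout. Concretely, I would construct a weight- and length-preserving bijection between the partitions $\lambda$ counted by $cp_{1-1}(n,m)$ and triples $(\beta,\mu,\eta)$, where $\beta$ is a base partition, $\mu$ is a partition into at most $n_1$ parts (counting zeros), and $\eta$ is a partition into multiples of $3$ with at most $n_2$ parts (counting zeros). The base partition now has $n_2$ pairs whose sums are $\equiv 1 \pmod 3$, together with $n_1$ singletons:
\begin{align*}
  \beta = [1, 3], [7, 9], \ldots, [6n_2 - 5, 6n_2 - 3], \; 6n_2 + 1, 6n_2 + 5, \ldots, 6n_2 + 4n_1 - 3,
\end{align*}
the smallest admissible configuration of this shape. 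A direct computation gives $\vert \beta \vert = 6n_2^2 - 2n_2 + 2n_1^2 - n_1 + 6n_2 n_1$ and $l(\beta) = n_1 + 2n_2$, so that summing $q^{\vert\beta\vert + \vert\mu\vert + \vert\eta\vert} x^{n_1 + 2n_2}$ over all such triples reproduces the claimed right-hand side; the denominators $\frac{1}{(q;q)_{n_1}}$ and $\frac{1}{(q^3;q^3)_{n_2}}$ account for $\mu$ and $\eta$ exactly as before.

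The forward moves are applied in the same order: first add the $i$th largest part of $\mu$ to the $i$th largest singleton, then advance the $i$th largest pair $\tfrac{1}{3}\cdot(\text{$i$th largest part of }\eta)$ times, each forward move raising a pair sum by $3$. The four cases translate by shifting positions so that the pair sums remain $\equiv 1 \pmod 3$: a difference-$2$ pair $[3k+1, 3k+3]$ advances to the difference-$3$ pair $[3k+2, 3k+5]$, a difference-$3$ pair $[3k-1, 3k+2]$ advances to the difference-$2$ pair $[3k+1, 3k+3]$, and the two ``b'' cases handle a collision with a trailing singleton by the same leapfrog adjustment: the smaller element of the pair is left behind as a singleton, while the trailing singleton is raised to become the larger element of the advanced pair, the net change again being $+3$. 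The pairing ambiguity that in Theorem \ref{thmCapparelli1GenFunc} occurs along streaks of consecutive multiples of $3$ here occurs along maximal arithmetic progressions of common difference $3$ whose terms are all $\equiv 2 \pmod 3$; I would adopt the same convention of pairing from the small end and leaving out the largest term when the count is odd, so that ``the $k$th pair (or singleton)'' is unambiguous.

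One point worth flagging is that, unlike in Theorems \ref{thmCapparelli2GenFunc} and \ref{thmcp0}, no extra factor of the form $(1 + xq^{\cdots})$ is needed. The reason is that the part $1$ is not trapped: since $1 + 3 = 4 \equiv 1 \pmod 3$, the part $1$ is absorbed into the smallest pair $[1,3]$, so there is no immovable small singleton forcing a second family of base partitions. Hence $cp_{1-1}$ behaves like the first Capparelli case rather than the second, and a single double sum suffices; a small numerical check (for instance that weight $7$ is realized exactly once by the singleton partition $1 + 6$ and once by the pair $2 + 5$, matching the two families $n_1 = 2$ and $n_2 = 1$) confirms there is no double counting.

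The main obstacle, as in the model proof, will be verifying that the leapfrog adjustments in the two ``b'' cases and their backward counterparts genuinely preserve the shifted difference condition, so that every near-collision of a forward-moving pair with a singleton resolves to a legal $cp_{1-1}$ partition, and that the backward moves, performed in the exact reverse order (smallest pair first, then the singletons), invert the forward moves. Establishing that these two procedures are mutually inverse, together with the consistency of the pairing convention along the $\equiv 2 \pmod 3$ streaks, is precisely what upgrades the raw generating-function identity used in the proof of Theorem \ref{thmCapparelli1GenFunc} into the asserted equality.
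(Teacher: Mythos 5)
Your construction is correct, but it takes a longer route than the paper. Every ingredient you describe --- the base partition $[1,3],[7,9],\ldots,[6n_2-5,6n_2-3],6n_2+1,\ldots,6n_2+4n_1-3$, its weight, the translated pairs $[3k+1,3k+3]$ and $[3k-1,3k+2]$, the ambiguity along streaks of parts $\equiv 2 \pmod 3$ --- is exactly the image of the Theorem \ref{thmCapparelli1GenFunc} apparatus under the map that subtracts $1$ from every part, and the paper exploits precisely that: its proof of Theorem \ref{thmcp1-1} is a two-line reduction. Adding $1$ to every part of a partition counted by $cp_{1-1}(n,m)$ yields a partition with all parts at least $2$, the same gap conditions, and pair sums shifted from $\equiv 1$ to $\equiv 0 \pmod 3$, so $cp_{1-1}(n,m)=cp_1(n+m,m)$ (the paper writes this relation with the arguments transposed, apparently a typo); the generating function is then obtained from \eqref{eqCapparelli1GenFunc} by replacing $x$ with $x/q$, which turns the exponent $6n_2^2+2n_1^2+6n_1n_2$ into $6n_2^2-2n_2+2n_1^2-n_1+6n_1n_2$. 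The reduction buys you exemption from all the delicate points you flag at the end --- the leapfrog adjustments, the pairing convention, the mutual inversion of forward and backward moves are inherited verbatim from the already-proven theorem --- while your self-contained version buys an intrinsic description of the $cp_{1-1}$ structure; your remark that no $(1+xq^{\cdots})$ factor appears because $1$ can join the pair $[1,3]$ is correct and is likewise automatic under the reduction, since $cp_1$ has a single family of base partitions. One small imprecision if you do pursue the direct route: in the analogue of case (IIb) the trailing singleton is \emph{not} raised (it becomes the larger element of the new pair unchanged, while the left-behind singleton is the old smaller element plus one), so your uniform description of the adjustment literally matches only case (Ib); this washes out once you translate the paper's four cases one by one.
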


\begin{proof}
 The admissible pairs in partitions enumerated by $cp_{1-1}(n, m)$ 
 are of the form $[3k+1, 3k+3]$ or $[3k-1, 3k+2]$.  
 The proof is complete once we see that $cp_{1-1}(n+m, m) = cp_1(n, m)$, 
 and use Theorem \ref{thmCapparelli1GenFunc}.  
\end{proof}

\begin{theorem}
\label{thmcp1-2}
  For $n, m \in \mathbb{N}$, 
  let $cp_{1-2}(n, m)$ be the number of partitions of $n$ into $m$ parts 
  with pairwise difference of parts is at least two, 
  and it is at least four unless the successive parts add up to $\equiv 2 \pmod{3}$.  
  Then, 
  \begin{align*}
%   \label{eqGFCp1-2}
    \sum_{m, n \geq 0} cp_{1-2}(m,n) q^n x^m 
    = \sum_{n_1, n_2 \geq 0} \frac{ q^{6n_2^2 - n_2 + 2n_1^2 - n_1 + 6n_2 n_1} x^{2n_2 + n_1} }
      { (q; q)_{n_1} (q^3; q^3)_{n_2} }
  \end{align*}
\end{theorem}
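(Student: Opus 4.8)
The plan is to repeat the construction in the proof of Theorem~\ref{thmCapparelli1GenFunc}, changing only the residue class that permits short gaps. First I would pin down the admissible pairs. A pair of parts at distance $2$ or $3$ is now allowed exactly when the two parts sum to $\equiv 2 \pmod 3$; solving the two congruences gives the diff-$2$ pairs $[3k,3k+2]$ and the diff-$3$ pairs $[3k+1,3k+4]$, to be compared with the pairs $[3k-1,3k+1]$ and $[3k,3k+3]$ (sums $\equiv 0$) used for $cp_1$. As there, one forward move raises the weight of a pair by $3$ and alternates its type: $[3k,3k+2]\mapsto[3k+1,3k+4]$ and $[3k+1,3k+4]\mapsto[3k+3,3k+5]$.

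The second step is to exhibit the base partition. The smallest admissible pair is now the diff-$3$ pair $[1,4]$, so---unlike the well-separated diff-$2$ pairs of \eqref{eqDefBasePtn}---the $n_2$ terminal pairs link into a single staircase of parts $\equiv 1 \pmod 3$:
\[
 \beta = [1,4],[7,10],\ldots,[6n_2-5,\,6n_2-2],\ 6n_2+1,\ 6n_2+5,\ \ldots,\ 6n_2+4n_1-3,
\]
the $n_1$ singletons lying above the pairs at mutual distance $4$, with the first singleton $6n_2+1$ appearing as the unpaired top of the odd-length staircase $1,4,\ldots,6n_2+1$. A short computation splits the weight as
\[
 |\beta| = \bigl(6n_2^2 - n_2\bigr) + \bigl(2n_1^2 - n_1 + 6n_1 n_2\bigr),
\]
the two summands being the weights of the $n_2$ staircase pairs and of the $n_1$ singletons, which is exactly the exponent in the statement of Theorem~\ref{thmcp1-2}. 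Adjoining a partition $\mu$ into at most $n_1$ parts (to feed the singletons) and a partition $\eta$ into at most $n_2$ multiples of $3$ (to feed the pairs) produces the raw series $\sum q^{|\beta|+|\mu|+|\eta|}\,x^{n_1+2n_2}$, which equals the right-hand side claimed.

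Next I would transport the moves. Forward moves act first on singletons, using the parts of $\mu$ from largest to smallest, and then on pairs, moving the $i$th largest pair $\tfrac13(\text{$i$th part of }\eta)$ times; the backward moves run in the exact reverse order. The four cases (Ia), (IIa), (Ib), (IIb) and their primed inverses carry over after shifting every threshold by one residue class: here a \emph{streak} is a maximal run of parts $\equiv 1 \pmod 3$ with successive differences $3$, and the convention for resolving the pair/singleton ambiguity inside a streak (smaller end for forward moves, larger end for backward moves) is copied unchanged, so that the phrase ``$k$th largest pair or singleton'' remains well defined.

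The main obstacle, exactly as in Theorem~\ref{thmCapparelli1GenFunc}, is the bookkeeping around these streaks: verifying that the adjustment steps of the (Ib)/(IIb) cases---where a forward move momentarily violates the gap condition before regrouping repairs it---preserve both the weight and the $cp_{1-2}$ difference condition, and that a forward move on a larger pair licenses exactly one forward move on the smaller pair immediately below it, so that $\eta_1\le\cdots\le\eta_{n_2}$. Once these local checks are in place, the forward and backward procedures are mutually inverse and the stated $1$--$1$ correspondence follows. I would also remark why, in contrast to $cp_{1-1}$, no one-line reduction to $cp_1$ is available: adding $1$ to every part of a partition counted by $cp_1$ injects it into the partitions counted by $cp_{1-2}$, but the image consists only of those with all parts $\ge 3$, so the partitions having a part equal to $1$ or $2$ are reached solely through the direct construction above.
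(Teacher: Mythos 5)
Your proposal follows the paper's proof exactly: the paper likewise identifies the admissible pairs as $[3k,3k+2]$ and $[3k-2,3k+1]$ (your $[3k+1,3k+4]$ reindexed), uses the identical base partition $[1,4],[7,10],\ldots,[6n_2-5,6n_2-2],6n_2+1,\ldots,6n_2+4n_1-3$ with the same weight computation, and then imitates the move machinery of Theorem~\ref{thmCapparelli1GenFunc}. Your closing remark explaining why $cp_{1-2}$ does not reduce to $cp_1$ by a uniform shift (unlike $cp_{1-1}$) is a correct and worthwhile observation that the paper leaves implicit.
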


{\bf Remark: } Notice that there are no restrictions on which parts can occur 
in Theorems \ref{thmcp0}-\ref{thmcp1-2}.  

\begin{proof}
 The admissible pairs in partitions enumerated by $cp_{1-2}(n, m)$ 
 are of the form $[3k, 3k+2]$ or $[3k-2, 3k+1]$.  
 We imitate the proof of Theorem \ref{thmCapparelli1GenFunc} using the base partition
 \begin{align*}
%  \label{baseptncp1-2beta}
  \beta = [1, 4], [7, 10], \ldots [6n_2 - 5, 6n_2-2], 6n_2 + 1, 6n_2 + 5, \ldots , 6n_2 + 4n_1 - 3,  
 \end{align*}
 whose weight is $6n_2^2 - n_2 + 2n_1^2 - n_1 + 6n_2 n_1$.  
 This weight is minimal among those partitions having $n_2$ pairs and $n_1$ singletons.  
\end{proof}

One can populate this list of theorems by imposing more conditions on the smallest parts, 
or on which parts can occur.  The proofs will all be alike.  

\section{G\"{o}llnitz-Gordon Identities and Some Missing Cases}
\label{secGG}

The series in this section appear to be new.  

\begin{theorem}[cf. G\"{o}llnitz-Gordon Identities]
\label{thmGG1-2GenFunc}
  For $n, m \in \mathbb{N}$, 
  let $D_{2,2}(n, m)$ be the number of partitions of $n$ into $m$ parts 
  in which the pairwise difference of parts is at least two, 
  and at least three unless the successive parts are both odd.  
  Let $D_{2, 1}(n, m)$ be the partitions enumerated by $D_{2,2}(n, m)$
  in which the smallest part is at least three.  
  Then, 
  \begin{align}
  \label{eqGfGG1}
    \sum_{m, n \geq 0} D_{2,2}(n, m) q^n x^m 
    = \sum_{n_1, n_2 \geq 0} \frac{ q^{4n_2^2 + (3n_1^2 - n_1)/2 + 4n_2 n_1} x^{2n_2 + n_1} }
      { (q; q)_{n_1} (q^4; q^4)_{n_2} } \\
  \label{eqGfGG2}
    \sum_{m, n \geq 0} D_{2,1}(n, m) q^n x^m 
    = \sum_{n_1, n_2 \geq 0} \frac{ q^{4n_2^2 + 4n_2 + (3n_1^2 + 3n_1)/2 + 4n_2 n_1} x^{2n_2 + n_1} }
      { (q; q)_{n_1} (q^4; q^4)_{n_2} } 
  \end{align}
\end{theorem}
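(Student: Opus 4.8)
The plan is to follow the construction in the proof of Theorem~\ref{thmCapparelli1GenFunc} almost verbatim, after first reducing the second series to the first by a shift. Observe that subtracting $2$ from every part of a partition counted by $D_{2,1}(n,m)$ produces a partition counted by $D_{2,2}(n-2m,m)$, and that this is a bijection: both the differences and the parities of the parts are unchanged, so the G\"ollnitz--Gordon difference conditions are preserved, while the hypothesis ``smallest part $\geq 3$'' becomes ``smallest part $\geq 1$''. Hence $D_{2,1}(n,m)=D_{2,2}(n-2m,m)$, so replacing $x$ by $xq^{2}$ in \eqref{eqGfGG1} turns $x^{2n_2+n_1}$ into $x^{2n_2+n_1}q^{4n_2+2n_1}$, and adding this exponent to the $q$-power of \eqref{eqGfGG1} yields exactly the $q$-power of \eqref{eqGfGG2}. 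Thus it suffices to establish \eqref{eqGfGG1}.

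For \eqref{eqGfGG1} I would attach to each $\lambda$ counted by $D_{2,2}(n,m)$ a triple $(\beta,\mu,\eta)$. Here a \emph{pair} is two successive parts differing by $2$ (necessarily both odd) and a \emph{singleton} is any remaining part; in a maximal run of consecutive odd numbers the pairs are read off from the smaller end, exactly as in Theorem~\ref{thmCapparelli1GenFunc}. The base partition with $n_2$ pairs and $n_1$ singletons is
\begin{align*}
 \beta ={}& [1+3]+[5+7]+\cdots+[(4n_2-3)+(4n_2-1)]\\
 &+(4n_2+1)+(4n_2+4)+\cdots+(4n_2+3n_1-2),
\end{align*}
whose pairs have difference $2$ and whose singletons have pairwise difference $3$; its length is $m=2n_2+n_1$ and a short computation gives $\vert\beta\vert=4n_2^2+4n_2 n_1+(3n_1^2-n_1)/2$. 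Let $\mu$ be a partition into at most $n_1$ parts (counting zeros) and $\eta$ a partition into at most $n_2$ parts, each a multiple of $4$ (counting zeros). Since $1/(q;q)_{n_1}$ and $1/(q^4;q^4)_{n_2}$ generate $\mu$ and $\eta$, it is immediate that
\begin{equation*}
 \sum_{\substack{n_1,n_2\geq 0\\ \mu,\eta}} q^{\vert\beta\vert+\vert\mu\vert+\vert\eta\vert}\,x^{2n_2+n_1}
 = \sum_{n_1,n_2\geq 0}\frac{q^{4n_2^2+(3n_1^2-n_1)/2+4n_2 n_1}\,x^{2n_2+n_1}}{(q;q)_{n_1}\,(q^4;q^4)_{n_2}},
\end{equation*}
the right-hand side of \eqref{eqGfGG1}. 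It then remains to manufacture a weight- and length-preserving bijection between the triples $(\beta,\mu,\eta)$ and the partitions counted by $D_{2,2}(n,m)$.

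The forward moves are organized as before. First I would add the $i$th largest part of $\mu$ to the $i$th largest singleton, from largest to smallest; because $\mu_1\leq\cdots\leq\mu_{n_1}$ the singletons keep pairwise difference $\geq 3$ and the pairs are untouched. Then I would push the $i$th largest pair forward $\tfrac14\eta_i$ times, from largest to smallest, where one forward move raises the weight of a pair by $4$. A single forward move on a pair $[2k-1,2k+1]$ falls into one of three cases according to the nearest larger part $s$: if $s\geq 2k+6$, or $s=2k+5$ after regrouping the resulting run of odd numbers, the move is free, $[2k-1,2k+1]\to[2k+1,2k+3]$; if $s=2k+3$ the pair leapfrogs the odd singleton, $[2k-1,2k+1],\,2k+3\to 2k-1,\,[2k+3,2k+5]$; and if $s=2k+4$ it leapfrogs the even singleton, $[2k-1,2k+1],\,2k+4\to 2k,\,[2k+3,2k+5]$. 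In each case the weight increases by exactly $4$, the length is unchanged, and one checks directly that the G\"ollnitz--Gordon difference conditions survive. Incorporating the immediately preceding smaller pair, as in Theorem~\ref{thmCapparelli1GenFunc}, shows that one forward move on the larger pair releases one forward move on the smaller pair below it, which legitimizes the nesting $\eta_1\leq\cdots\leq\eta_{n_2}$. The backward moves---undoing the pair moves from smallest to largest to recover $\eta$, then subtracting from the singletons to recover $\mu$ and $\beta$---are the literal reverses of these three cases.

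The main obstacle is precisely this case analysis for the pair moves. Unlike Capparelli's setting, where a pair alternated between a difference-$2$ and a difference-$3$ shape as it advanced by $3$, here every pair is an odd--odd pair of difference $2$ advancing by $4$, so the bookkeeping is instead governed by the \emph{parity} of the blocking singleton; one must verify that the two leapfrog adjustments (over an odd versus an even singleton) land on valid G\"ollnitz--Gordon partitions, that they are exact inverses of the matching backward moves, and that the regrouping convention at runs of consecutive odd numbers keeps ``the $i$th largest pair'' unambiguous throughout, including at the boundary where the part just below a pair is itself odd at distance $2$. Once these checks are in place, the forward and backward procedures are mutually inverse, which establishes the bijection and hence \eqref{eqGfGG1}; \eqref{eqGfGG2} then follows by the substitution $x\mapsto xq^{2}$ described above.
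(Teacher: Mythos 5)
Your proposal is correct and follows essentially the same route as the paper: the same reduction of \eqref{eqGfGG2} to \eqref{eqGfGG1} by adding/subtracting $2$ to every part (equivalently $x\mapsto xq^{2}$), the same base partition $[1,3],[5,7],\ldots,[4n_2-3,4n_2-1],4n_2+1,4n_2+4,\ldots,4n_2+3n_1-2$, the same auxiliary partitions $\mu$ and $\eta$ (multiples of $4$), and the same forward/backward pair moves of weight $4$ with leapfrog adjustments. The only cosmetic difference is your pairing convention in a run of consecutive odd parts (smaller end first even during forward moves, which forces your extra leapfrog-over-an-odd-singleton case), whereas the paper uses largest-pair-first during forward moves so that this configuration never arises; both conventions work, and you correctly flag this bookkeeping as the point requiring verification.
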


\begin{proof}
 \eqref{eqGfGG2} follows from \eqref{eqGfGG1} once we see that $D_{2,1}(n,m) = D_{2,2}(n+2m,m)$.  
 The proof of \eqref{eqGfGG1} has the same framework as the proof of Theorem \ref{thmCapparelli1GenFunc}.  
 The difference is that the pairs are determined from parts with difference exactly two, 
 and the remaining parts are singletons.  
 A singleton may have difference two with its predecessor or successor, but not both.  
 The base partition is
 \begin{align*}
%  \label{baseptngg1-2beta}
  [1, 3], [5, 7], \ldots, [4n_2 - 3, 4n_2 - 1], 4n_2 + 1, 4n_2 + 4, \ldots, 4n_2 + 3 n_1 - 2,  
 \end{align*}
 the weight of which is $4n_2^2 + (3n_1^2 - n_1)/2 + 4n_2 n_1$.  
 This is the partition having $n_2$ pairs, $n_1$ singletons, and minimal weight.  
 The pairs are indicated in brackets.  
 They are the pair of parts with difference two, hence both parts in them are necessarily odd.  
 Of two pairs, the one with larger parts is declared the greater pair.  
 
 The forming of pairs may be updated dynamically during forward or backward moves.  
 When forming pairs in a successive streak of odd parts, 
 we use the largest pair first in case of implementing forward moves on pairs, 
 and smallest pair first otherwise.  
 These cases differ only when the number of odd parts in the consecutive streak is odd and greater than three.  
 
 The forward moves on the pairs are performed according to the exclusive cases below.  
  \begin{align*}
    (\textrm{parts } \leq 2k-3), & [\mathbf{2k-1, 2k+1}], (\textrm{parts } \geq 2k+5) \\[3pt]
    & \big\downarrow \textrm{ one forward move on the displayed pair} \\[3pt]
    (\textrm{parts } \leq 2k-3), & [\mathbf{2k+1, 2k+3}], (\textrm{parts } \geq 2k+5) 
  \end{align*}
  with a possible update of pairs if there is a part $= 2k+5$
  
  \begin{align*}
    (\textrm{parts } \leq 2k-3), & [\mathbf{2k-1, 2k+1}], 2k+4, (\textrm{parts } \geq 2k+7) \\[3pt]
    & \big\downarrow \textrm{ one forward move on the displayed pair} \\[3pt]
    (\textrm{parts } \leq 2k-3), & [\mathbf{2k+1}, \underbrace{\mathbf{2k+3}], 2k+4}_{!}, (\textrm{parts } \geq 2k+7) 
    \textrm{ (temporarily)} \\[3pt]
    & \big\downarrow \textrm{ adjustment} \\[3pt]
    (\textrm{parts } \leq 2k-3), & 2k, [\mathbf{2k+3, 2k+5}], (\textrm{parts } \geq 2k+7) 
  \end{align*}
  Here again, there is a possible update on the pairing if there is a part $= 2k+7$.  
  The adjustment does not alter the weight of the partition.  
  
  Each forward move on the pairs adds 4 to the weight, 
  hence the partition $\eta$ consists of multiples of four instead of multiples of three.  
  Backward moves can be defined without much difficulty.  
  The proof can be completed by suitably adjusting the proof of Theorem \ref{thmCapparelli1GenFunc}.  
\end{proof}

One can also alter the conditions of $D_{2,2}(n, m)$ or $D_{2,1}(n, m)$ slightly, 
and construct generating functions for the resulting partitions enumerants.  
It is besides the point whether or not those multiple series are expressible as nice infinite products.  
We give two examples.  

\begin{theorem}
\label{thmDo2-1}
  For $n, m \in \mathbb{N}$, 
  let $D^o_{2, 1}(n, m)$ be the number of partitions of $n$ into $m$ parts 
  in which the smallest part is at least two, 
  the pairwise difference of parts is at least two, 
  and at least three unless the successive parts are both odd.  
  Then, 
  \begin{align*}
%   \label{eqGfDo2-1}
    \sum_{m, n \geq 0} D^o_{2,1}(n, m) q^n x^m 
    = \sum_{n_1, n_2 \geq 0} \frac{ q^{4n_2^2 + 4n_2 + (3n_1^2 + 3n_1)/2 + 4n_2 n_1} (1 + xq^{4n_2 + 2n_1 + 2}) x^{2n_2 + n_1} }
      { (q; q)_{n_1} (q^4; q^4)_{n_2} }
  \end{align*}
\end{theorem}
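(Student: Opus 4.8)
The plan is to adapt the proof of Theorem~\ref{thmCapparelli2GenFunc}, using the G\"ollnitz-Gordon machinery of Theorem~\ref{thmGG1-2GenFunc}, by splitting the partitions enumerated by $D^o_{2,1}(n,m)$ according to whether their smallest part equals $2$ or is at least $3$. Because $2$ is even, the difference condition forces its successor to be at least $5$, so the part $2$ can never lie in a pair (a pair consists of two odd parts differing by $2$); it is always an unmoved singleton. Both cases will be organized around base partitions sharing the same number $n_2$ of pairs and $n_1$ of movable singletons, and the two resulting double series will be added to produce the factor $(1 + xq^{4n_2+2n_1+2})$.

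First I would dispose of the case in which the smallest part is at least $3$. These are exactly the partitions counted by $D_{2,1}(n,m)$, so by \eqref{eqGfGG2} their contribution is
\begin{equation*}
  \sum_{n_1, n_2 \geq 0} \frac{ q^{4n_2^2 + 4n_2 + (3n_1^2 + 3n_1)/2 + 4n_2 n_1} x^{2n_2 + n_1} }{ (q; q)_{n_1} (q^4; q^4)_{n_2} },
\end{equation*}
built from the base partition $[3,5],[7,9],\ldots,[4n_2-1,4n_2+1],\,4n_2+3,\,4n_2+6,\ldots,4n_2+3n_1$ together with the forward moves on pairs and singletons from Theorem~\ref{thmGG1-2GenFunc}. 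This supplies the summand $1$ inside the factor $(1 + xq^{4n_2+2n_1+2})$.

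Next I would handle the case in which the smallest part is exactly $2$. Here I fix the part $2$ and attach to it a G\"ollnitz-Gordon structure whose smallest part is at least $5$, giving the base partition
\begin{equation*}
  \beta = 2,\, [5,7],\, [9,11],\, \ldots,\, [4n_2+1, 4n_2+3],\, 4n_2+5,\, 4n_2+8,\, \ldots,\, 4n_2+3n_1+2.
\end{equation*}
The unmoved $2$ contributes one additional part, hence an extra $x$, and a direct computation gives $\vert \beta \vert = 4n_2^2 + 4n_2 + (3n_1^2+3n_1)/2 + 4n_2 n_1 + (4n_2 + 2n_1 + 2)$, namely the first-case base weight plus $4n_2+2n_1+2$. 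Running the forward and backward moves on the pairs and singletons exactly as in Theorem~\ref{thmGG1-2GenFunc}, with the fixed $2$ never taking part, produces the summand $xq^{4n_2+2n_1+2}$, and summing the two series gives the claimed generating function.

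The hard part will be verifying that the unmoved $2$ leaves the bijection intact: one must confirm that the terminal position of the smallest pair is $[5,7]$ rather than $[3,5]$, so that backward moves on it stop correctly once the gap of $3$ above the fixed $2$ is reached, and that the dynamic pairing rule of Theorem~\ref{thmGG1-2GenFunc} is invoked only among parts that are at least $5$. Beyond this, the sole genuinely new computation is the weight bookkeeping yielding the exponent $4n_2+2n_1+2$, which is routine.
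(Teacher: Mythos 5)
Your proposal is correct and follows essentially the same route as the paper: split according to whether the part $2$ occurs, use the base partition $[3,5],[7,9],\ldots,[4n_2-1,4n_2+1],4n_2+3,\ldots,4n_2+3n_1$ (equivalently, the $D_{2,1}$ series \eqref{eqGfGG2}) when it does not, and the base partition $2,[5,7],\ldots,[4n_2+1,4n_2+3],4n_2+5,\ldots,4n_2+3n_1+2$ with the $2$ held fixed when it does, the weight gap $4n_2+2n_1+2$ and the extra part accounting for the factor $xq^{4n_2+2n_1+2}$. Your weight computation agrees with the paper's stated value $4n_2^2+8n_2+(3n_1^2+7n_1)/2+4n_2n_1+2$, and your observations that $2$ cannot belong to a pair and that the smallest pair terminates at $[5,7]$ are exactly the checks the paper leaves implicit.
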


\begin{proof}
 The idea of the proof is a combination of the proofs of Theorems \ref{thmCapparelli1GenFunc} and \ref{thmGG1-2GenFunc}.  
 We have two types of base partitions as given below.  
 \begin{align}
 \label{baseptnDo2-1beta1}
  \beta = [3, 5], [7, 9], \ldots [4n_2 - 1, 4n_2 + 1], 4n_2 + 3, 4n_2 + 6, \ldots , 4n_2 + 3n_1
 \end{align}
 with weight $4n_2^2 + 4n_2 + (3n_1^2 + 3n_1)/2 + 4n_2 n_1$, or
 \begin{align}
 \label{baseptnDo2-1beta2}
  \beta = 2, [5, 7], [9, 11], \ldots [4n_2 + 1, 4n_2 + 3], 4n_2 + 5, 4n_2 + 8, \ldots , 4n_2 + 3n_1 + 2 
 \end{align}
 with weight $4n_2^2 + 8n_2 + (3n_1^2 + 7n_1)/2 + 4n_2 n_1 + 2$.  
 The base partition \eqref{baseptnDo2-1beta2} gives rise to the desired partitions which contain the part 2, 
 and \eqref{baseptnDo2-1beta1} to which do not.  
 The part 2 in \eqref{baseptnDo2-1beta2} is not moved throughout.  
\end{proof}

\begin{theorem}
\label{thmDe2-2}
  For $n, m \in \mathbb{N}$, 
  let $D^e_{2, 2}(n, m)$ be the number of partitions of $n$ into $m$ parts 
  in which the pairwise difference of parts is at least two, 
  and at least three unless the successive parts are both even.  
  Then, 
  \begin{align*}
%   \label{eqGfDe2-2}
    \sum_{m, n \geq 0} D^e_{2,2}(n, m) q^n x^m 
    = \sum_{n_1, n_2 \geq 0} \frac{ q^{4n_2^2 + 2n_2 + (3n_1^2 + n_1)/2 + 4n_2 n_1} (1 + xq^{4n_2 + 2n_1 + 1}) x^{2n_2 + n_1} }
      { (q; q)_{n_1} (q^4; q^4)_{n_2} }
  \end{align*}
\end{theorem}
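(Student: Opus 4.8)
The plan is to follow the template established in the proofs of Theorems \ref{thmCapparelli1GenFunc} and \ref{thmGG1-2GenFunc}, adapting the bijection to the modified difference condition for $D^e_{2,2}(n,m)$. The governing rule here is that a pair is a pair of successive parts differing by exactly two and \emph{both even}, while all other parts are singletons (a singleton may have difference two with a neighbor provided that neighbor makes the pair not-both-even). First I would exhibit the two families of base partitions according to whether the minimal partition wants an unpaired smallest part. Because both parts of a pair are even, the smallest admissible pair is $[2,4]$, and a run of consecutive even parts is grouped into pairs just as the odd runs were handled in Theorem \ref{thmGG1-2GenFunc}. I expect the two base partitions to be

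\begin{align*}
 \beta &= [2, 4], [6, 8], \ldots, [4n_2 - 2, 4n_2], 4n_2 + 2, 4n_2 + 5, \ldots, 4n_2 + 3n_1 - 1, \\
 \beta &= 1, [4, 6], [8, 10], \ldots, [4n_2, 4n_2 + 2], 4n_2 + 4, 4n_2 + 7, \ldots, 4n_2 + 3n_1 + 1,
\end{align*}

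the first for partitions avoiding the part $1$ and the second for those containing it, with the part $1$ held fixed throughout. A routine triangular-number computation gives the exponent $4n_2^2 + 2n_2 + (3n_1^2 + n_1)/2 + 4n_2 n_1$ for the first base partition, matching the leading term in the claimed series, while the second base partition carries an extra factor of $xq$ reflecting the unmoved $1$, which after collecting the shared factor produces the bracket $(1 + xq^{4n_2 + 2n_1 + 1})$.

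Next I would define forward moves on singletons (add the $i$th largest part of $\mu$ to the $i$th largest singleton) and forward moves on pairs (each move adds $4$ to the weight of a pair, so $\eta$ is a partition into multiples of four with at most $n_2$ parts). The move cases and the accompanying adjustment step are the parity-shifted analogues of the two displayed cases in Theorem \ref{thmGG1-2GenFunc}: a pair $[2k, 2k+2]$ slides to $[2k+2, 2k+4]$, and when an intervening singleton blocks the slide one performs the adjustment that rewrites $[2k+2, 2k+4], 2k+5$ as $2k+1, [2k+4, 2k+6]$, preserving weight and the difference condition. As before, larger pairs are moved at least as often as smaller ones, guaranteeing $\eta_1 \le \cdots \le \eta_{n_2}$, and the reverse procedure (backward moves, stowing the smallest pair as $[2,4]$ first, then subtracting $\mu_i$ from singletons) recovers $(\beta, \mu, \eta)$ uniquely. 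The sum of the two series over the two base-partition types, after factoring, yields the stated generating function.

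The main obstacle is the pairing/regrouping convention in a long run of consecutive even parts, exactly the subtlety flagged in Theorem \ref{thmGG1-2GenFunc} when the run length is odd and exceeds three. One must fix, once and for all, that pairs in an even run are formed from the smallest end during backward moves and from the largest end during forward moves, so that the ``$k$th largest pair'' is unambiguous and so that one forward move on a larger pair frees exactly one forward move on the preceding smaller pair. Verifying that the forward and backward move cases are genuine inverses through this dynamic regrouping — including the adjustment-induced rulebreaking in the middle — is where the care lies; everything else is bookkeeping on weights and difference inequalities identical to that carried out in the earlier proofs.
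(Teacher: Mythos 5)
Your construction is correct, but it takes a much longer route than the paper, which disposes of this theorem in one line: adding $1$ to every part of a partition counted by $D^e_{2,2}(n,m)$ turns each both-even pair into a both-odd pair and forces the smallest part to be at least $2$, giving a weight-preserving-up-to-$m$ bijection with the partitions counted by $D^o_{2,1}(n+m,m)$ of Theorem \ref{thmDo2-1}; at the level of generating functions this is just the substitution $x \mapsto x/q$ in the series of Theorem \ref{thmDo2-1}, which immediately produces the stated exponent $4n_2^2+2n_2+(3n_1^2+n_1)/2+4n_2n_1$ and the bracket $(1+xq^{4n_2+2n_1+1})$. (The paper states the relation as $D^e_{2,2}(n+m,m)=D^o_{2,1}(n,m)$, with the shift apparently on the wrong side; the correct form is $D^e_{2,2}(n,m)=D^o_{2,1}(n+m,m)$, as the weight comparison of the two series confirms.) Your two base partitions are precisely the images of \eqref{baseptnDo2-1beta1} and \eqref{baseptnDo2-1beta2} under subtracting $1$ from each part, your weight computations check out, and your move cases are the correct parity-shifted analogues, so nothing you wrote is wrong — you have essentially re-proved Theorem \ref{thmDo2-1} from scratch one unit lower. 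The only imprecision is the passing claim that the second base partition contributes ``an extra factor of $xq$'': the extra factor is $xq^{4n_2+2n_1+1}$, since the entire second base partition sits $4n_2+2n_1+1$ higher in weight than the first (each pair shifted by $4$, each singleton by $2$, plus the fixed part $1$); you do state the correct bracket afterwards, so this is cosmetic. The trade-off is the usual one: your self-contained argument must re-verify the pairing/regrouping conventions and the inverse character of the moves, whereas the paper's reduction inherits all of that from Theorem \ref{thmDo2-1} for free.
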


\begin{proof}
 This is a corollary of Theorem \ref{thmDo2-1} once we notice that $D^e_{2, 2}(n+m, m) = D^o_{2, 1}(n, m)$.  
\end{proof}

\section{New $q$-series Identities}
\label{secCorollaries} 

Using the substitution $x = 1$ as discussed after \eqref{eqAndrewsGordonSeries}, 
we have the following formulas.  

\begin{cor}
 The following identities hold.  
 \begin{align}
 \label{eqCoroCapparelli1} 
  ( -q^2, -q^3, -q^4, -q^6; q^6)_\infty 
  & = \sum_{n_1, n_2 \geq 0} \frac{ q^{ 2 n_1^2 + 6 n_1 n_2 + 6 n_2^2 } }
      { (q; q)_{n_1} (q^3; q^3)_{n_2} } \\
 \label{eqCoroCapparelli2}
  ( -q, -q^3, -q^5, -q^6; q^6)_\infty 
  & = \sum_{n_1, n_2 \geq 0} \frac{ q^{6n_2^2 + 3n_2 + 2n_1^2 + n_1 + 6n_2 n_1} (1 + q^{3n_2 + 2n_1 + 1}) }
      { (q; q)_{n_1} (q^3; q^3)_{n_2} } \\ 
 \label{eqCoroGG1}
  \frac{1}{( q, q^4, q^7; q^8)_\infty} 
  & =  \sum_{n_1, n_2 \geq 0} \frac{ q^{4n_2^2 + (3n_1^2 - n_1)/2 + 4n_2 n_1} }
      { (q; q)_{n_1} (q^4; q^4)_{n_2} } \\ 
 \label{eqCoroGG2}
  \frac{1}{( q^3, q^4, q^5; q^8)_\infty} 
  & =  \sum_{n_1, n_2 \geq 0} \frac{ q^{4n_2^2 + 4n_2 + (3n_1^2 + 3n_1)/2 + 4n_2 n_1} }
      { (q; q)_{n_1} (q^4; q^4)_{n_2} } 
 \end{align}
\end{cor}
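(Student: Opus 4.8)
The plan is to read all four identities as the $x=1$ specializations of the bivariate generating functions already established, combined with the classical Capparelli and G\"{o}llnitz--Gordon theorems quoted in the introduction. Recall from the discussion following \eqref{eqAndrewsGordonSeries} that setting $x=1$ in a series of the form $\sum_{m,n\ge 0} f(n,m)\,q^n x^m$ erases the bookkeeping of the number of parts and returns $\sum_{n\ge 0}\bigl(\sum_{m\ge 0} f(n,m)\bigr)q^n$. Applying this to the four series sides \eqref{eqCapparelli1GenFunc}, \eqref{eqGFCp2}, \eqref{eqGfGG1} and \eqref{eqGfGG2} turns each of them into the univariate generating function for the corresponding difference-condition enumerant, namely $\sum_n CP_1(n)q^n$, $\sum_n CP_2(n)q^n$, $\sum_n D_{2,2}(n)q^n$ and $\sum_n D_{2,1}(n)q^n$, since $\sum_m cp_r(n,m)=CP_r(n)$ and $\sum_m D_{2,a}(n,m)=D_{2,a}(n)$ hold by the very definitions of these refinements. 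Thus each right-hand side of the corollary is already equal to one of these four univariate series, and it remains only to identify the matching product on the left.

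Second, I would invoke the classical identities to pass from the difference side to the congruence side. By Theorem \ref{thmCapparelli1} we have $CP_1(n)=CPR_1(n)$, where $CPR_1(n)$ counts partitions of $n$ into \emph{distinct} parts $\not\equiv\pm1\pmod 6$, that is, into distinct parts whose residues mod $6$ lie in $\{0,2,3,4\}$. Since distinct parts contribute a factor $(1+q^k)$ for each admissible $k$, the generating function is the product $(-q^2;q^6)_\infty(-q^3;q^6)_\infty(-q^4;q^6)_\infty(-q^6;q^6)_\infty=(-q^2,-q^3,-q^4,-q^6;q^6)_\infty$, which is the left-hand side of \eqref{eqCoroCapparelli1}. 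The same reasoning with Theorem \ref{thmCapparelli2} gives $CP_2(n)=CPR_2(n)$, whose admissible residues mod $6$ are $\{0,1,3,5\}$, the complement of $\pm2$, yielding $(-q,-q^3,-q^5,-q^6;q^6)_\infty$ as in \eqref{eqCoroCapparelli2}.

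Third, I would handle the two G\"{o}llnitz--Gordon cases identically, the only change being that parts may now repeat, so each admissible residue contributes a reciprocal factor $1/(q^k;q^8)_\infty$ rather than $(1+q^k)$. Theorem \ref{thmGG1} gives $D_{2,2}(n)=C_{2,2}(n)$ with parts $\equiv 1,4,7\pmod 8$, so $\sum_n D_{2,2}(n)q^n=1/(q,q^4,q^7;q^8)_\infty$, which is \eqref{eqCoroGG1}; and Theorem \ref{thmGG2} gives $D_{2,1}(n)=C_{2,1}(n)$ with parts $\equiv 3,4,5\pmod 8$, so $\sum_n D_{2,1}(n)q^n=1/(q^3,q^4,q^5;q^8)_\infty$, which is \eqref{eqCoroGG2}.

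Finally, this result is essentially a formal corollary rather than a statement requiring a new argument, so I do not expect a substantial obstacle: all the combinatorial content lives in Theorems \ref{thmCapparelli1GenFunc}, \ref{thmCapparelli2GenFunc} and \ref{thmGG1-2GenFunc} and in the classical identities. The only point needing care is the correct reading of each product, namely tracking which residues mod $6$ or mod $8$ are admissible and remembering the distinction between the distinct-part convention, which gives plus signs and hence $(-q^a;\,\cdot\,)_\infty$ factors, and the repetition-allowed convention, which gives reciprocal factors. I would also double-check the residue classes against the $\pm1$ and $\pm2$ exclusions for Capparelli and against the stated classes $1,4,7$ and $3,4,5$ for G\"{o}llnitz--Gordon, to be sure no class is mislabeled.
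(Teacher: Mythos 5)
Your proposal is correct and follows exactly the paper's own (one-line) proof: set $x=1$ in \eqref{eqCapparelli1GenFunc}, \eqref{eqGFCp2}, \eqref{eqGfGG1}, \eqref{eqGfGG2} and combine with Theorems \ref{thmCapparelli1}, \ref{thmCapparelli2}, \ref{thmGG1}, \ref{thmGG2} respectively. The extra care you take in writing out the admissible residue classes and distinguishing the distinct-part products $(-q^a;q^6)_\infty$ from the reciprocal factors $1/(q^a;q^8)_\infty$ is exactly the right bookkeeping, and all four products check out.
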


\begin{proof}
 \eqref{eqCoroCapparelli1}, \eqref{eqCoroCapparelli2}, \eqref{eqCoroGG1}, \eqref{eqCoroGG2} 
 are combinations of 
 Theorem \ref{thmCapparelli1} and \eqref{eqCapparelli1GenFunc}, 
 Theorem \ref{thmCapparelli2} and \eqref{eqGFCp2}, 
 Theorem \ref{thmGG1} and \eqref{eqGfGG1}, 
 Theorem \ref{thmGG2} and \eqref{eqGfGG2}, respectively.  
\end{proof}

Above, 
\begin{align*}
 (a_1, a_2, \ldots, a_k; q)_\infty := (a_1; q)_\infty (a_2; q)_\infty \cdots (a_k; q)_\infty.  
\end{align*}

\section{Further Work}
\label{secConclusion}

Kanade and Russell constructed the formulas \eqref{eqCapparelli1GenFunc} and \eqref{eqGFCp2} as well.  
Finite versions of these formulas are presented by Berkovich and Uncu \cite{Berkovich-Uncu}, 
in context of an alternative proof as well as finite versions of Capparelli's identities.  

Kanade-Russell's first four conjectures \cite{KR-conj} can be considered in this context.  
However, for the remaining two, one had better use Gordon marking and related machinery \cite{K-GordonMarking}, 
otherwise the proofs become much longer.  
We leave this to another paper.  

Using the techniques here, 
it was not possible to treat Schur's partition identity \cite{Schur-Schur}.  
The best one could get is either of the series below.  
Here, $s(n, m)$ is the number of partitions of $n$ into $m$ parts 
with pairwise difference at least three, 
and no consecutive multiples of three are allowed.  
\begin{align*}
 \sum_{n, m \geq 0} s(n, m) x^m q^n 
 & = \sum_{n_1, n_2 \geq 0} 
  \frac{ q^{6n_2^2 - n_2 + 2n_1^2 - n_1 + 6n_1n_2} a_{n_2}(q) x^{2n_2 + n_1} }
    { (q; q)_{n_1} } \\
 & = \sum_{n_1, n_2 \geq 0} 
  \frac{ q^{6n_2^2 - n_2 + 2n_1^2 - n_1 + 6n_1n_2} \alpha_{n_2}(q) x^{2n_2 + n_1} }
    { (q; q)_{n_1} (q^3; q^3)_{n_2} }, 
\end{align*}
\begin{align*}
 (1 - q^{3n}) a_n(q) & = (1 + q) a_{n-1}(q) - q a_{n-2}(q) \\ 
 \alpha_n(q) & = (1 + q) \alpha_{n-1}(q) - q (1 - q^{3n-3}) \alpha_{n_2}(q), 
\end{align*}
with the initial conditions $a_0 = \alpha_0 = 1$, 
and $a_n = \alpha_n = 0$ for $n < 0$.  
It is easy to show that $a_n(q)$ has positive coefficients, 
but the same for $\alpha_n(q)$ is not immediately clear.  
The difficulty is that once the base partition 
\begin{align*}
 \beta = [1, 4], [7, 10], \ldots, [6n_2-5, 6n_2-2], 
  6n_2 + 1, 6n_2 + 5, \ldots, 6n_2 + 4n_1 - 3 
\end{align*}
is established, the forward and backward moves of the pairs 
require uneven steps of alternating sizes 3 and 6, 
although they are easily defined.  
To resolve the issue, we need the refined versions of identities in \cite{Boulet}.  
In particular, we need the track of number of parts as well.  

A big collection of partition identities awaiting Andrews-Gordon type $q$-series identities 
may be found in \cite{Lovejoy}.  
The developed machinery does not seem to readily apply, though.  

Another direction may be constructing an evidently positive series for the partitions 
enumerated in Siladi\'{c}'s theorem \cite{Siladic}.

\noindent
{\bf Acknowledgements: }
We thank Karl Mahlburg for the useful discussions and suggestion of terminology, 
Matthew Russell for pointing out \cite{Dousse-Lovejoy}, 
and Ali K. Uncu for the informative workshop talks on \cite{Boulet}.

\bibliographystyle{amsplain}

\end{document}